\newtheorem{proposition}{Proposition}[section]
\newtheorem{rem}{Remark}[section]
\newtheorem{examp}{Example}[section]
\newcommand{\brac}[1]{\left(#1\right)}
\newcommand{\abs}[1]{\left\vert#1\right\vert}
\newcommand{\norm}[1]{\left\Vert#1\right\Vert}
\def\half{\frac 1 2}
\newcommand{\bk}{\mathbf{k}}
\newcommand{\bx}{\mathbf{x}}
\newfont{\iams}{msbm9}
\newcommand{\commentbis}[1]{}
\newcommand{\be}{\begin{eqnarray}}
\newcommand{\ee}{\end{eqnarray}}
\newcommand{\beno}{\begin{eqnarray*}}
\newcommand{\eeno}{\end{eqnarray*}}
\newcommand{\barr}[1]{\begin{array}{#1}}
\newcommand{\earr}{\end{array}}
\newcommand{\beq}{\begin{equation}}
\newcommand{\eeq}{\end{equation}}
\newcommand{\beqa}{\begin{eqnarray}}
\newcommand{\eeqa}{\end{eqnarray}}
\newcommand{\bv}{{\bf v}}
\newcommand{\bV}{{\bf V}}
\newcommand{\bn}{{\bf n}}
\newcommand{\Rd}{{\mathbb{R}^d}}
\newcommand{\bP}{{\bf P}}
\newcommand{\bzero}{\mathbf{0}}
\newcommand{\bone}{\mathbf{1}}
\newcommand{\bl}{\mathbf{l}}
\newcommand{\bi}{\mathbf{i}}
\newcommand{\bj}{\mathbf{j}}
\newcommand{\bW}{\mathbf{W}}
\newcommand{\bal}{{\bm{\alpha}}}
\newcommand{\bb}{{\bm{\beta}}}
\title
{An adaptive multiresolution discontinuous Galerkin method with artificial viscosity for scalar hyperbolic conservation laws in multidimensions}
\author{
Juntao Huang
\thanks{Department of Mathematics, Michigan State University, East Lansing, MI 48824, USA. E-mail: huangj75@msu.edu.}
\and
 Yingda Cheng
\thanks{Department of Mathematics, Department of  Computational Mathematics, Science and Engineering, Michigan State University,
               East Lansing, MI 48824, USA.
E-mail: ycheng@msu.edu. Research is supported by NSF grants  DMS-1453661 and DMS-1720023.}
}
\begin{document}

%


\maketitle

\begin{abstract}
In this paper, we develop an adaptive multiresolution discontinuous Galerkin (DG) scheme for scalar hyperbolic conservation laws in multidimensions. Compared with previous work  for linear hyperbolic equations \cite{guo2016transport, guo2017adaptive}, a class of   interpolatory multiwavelets are   applied to efficiently compute the nonlinear integrals over elements and edges in DG schemes. The resulting algorithm, therefore can achieve similar computational complexity as the sparse grid DG method for smooth solutions. Theoretical and numerical studies are performed taking into consideration of accuracy and stability with regard to the choice of the interpolatory multiwavelets.  Artificial viscosity is added to capture the shock and only acts on the leaf elements taking advantages of the multiresolution representation. Adaptivity is realized by auto error thresholding based on   hierarchical surplus. Accuracy and robustness are demonstrated by several numerical tests.
\end{abstract}

\bigskip

\vfill


\noindent {\bf Keywords:} discontinuous Galerkin methods; multiresolution analysis; sparse grids; hyperbolic conservation laws; artificial viscosity.


\section{Introduction}
\setcounter{equation}{0}
\setcounter{figure}{0}
\setcounter{table}{0}

In this paper, we develop an adaptive multiresolution discontinuous Galerkin (DG) method for scalar nonlinear conservation laws in multidimensional case:
\begin{equation}\label{eq:cons-law}
    \partial_t u + \nabla\cdot {\bf f}(u) = 0, \quad (\bx,t) \in \Omega\times(0,T],
\end{equation}
with appropriate initial and   boundary conditions. Here $\Omega\subset\Rd$, $u=u(\bx,t)$ is the unknown function, and ${\bf f}(u)=(f_1(u),f_2(u),\dots,f_d(u))$ is the physical flux. We assume $\Omega=[0,1]^d$ in the paper, but the discussion can be easily generalized to arbitrary box-shaped domains.  

The DG method is a class of finite element methods using discontinuous approximation space for the numerical solutions and the test functions. The Runge-Kutta DG scheme   for hyperbolic equations became very popular due to its provable stability and convergence, excellent conservation properties and accommodation for adaptivity and parallel implementations. We refer readers to the review papers \cite{cockburn2001review,cockburn2000development} for details. To adapt the degrees of freedom according to the local behavior of the numerical solution, many kinds of \emph{a posteriori} error estimates have been designed for the DG schemes for hyperbolic equations, see e.g. \cite{bey1996hp,adjerid2002posteriori,remacle2003adaptive,hartmann2002adaptive,houston2002hp,hartmann2003adaptive}.  On the other hand, by using multiresolution analysis (MRA), automatic adaptivity can be achieved and no additional  \emph{a posteriori}  error indicator is needed. Such ideas have been used to  accelerate the computations for conservation laws under finite difference or finite volume frameworks \cite{harten_multiresolution_1995,bihari_multiresolution_1997,dahmen_multiresolution_2001,alves_adaptive_2002,cohen_fully_2003,chiavassa2005multiresolution} and were used as trouble cell indicators for DG methods \cite{vuik2014multiwavelet}. In recent years, there have been interests in developing  adaptive multiresolution DG schemes  \cite{calle_wavelets_2005}. In particular,   multiresolution-based adaptive DG schemes for solving one dimensional scalar conservation laws were proposed by M{\"u}ller \textit{et al.} in \cite{hovhannisyan2014scalar} and further extended to multidimensional cases \cite{gerhard2016mutidim,gerhard2017thesis}, compressible flows \cite{iacono2011compressible,gerhard2015compressible} and shallow water equations \cite{gerhard2015shallow,kesserwani2015shallow}.  {  The key idea is to perform a multiresolution analysis  using multiwavelets on a hierarchy of nested grids for the data given on a uniformly refined mesh. With such an approach, the scheme on a uniformly refined mesh is computed   on a locally refined adapted subgrid while preserving the accuracy. }

Another idea to utilize the computational advantages of  the MRA framework is called the sparse grid method \cite{bungartz2004sparse}, which is a well-known tool to compute high-dimensional PDEs and stochastic differential equations.   
Based on the attractive features of DG methods for solving convection-dominated problems, in recent years, we initiated a line of research developing the  (adaptive) sparse grid DG methods, including the work for elliptic equations \cite{wang2016elliptic}, transport equations \cite{guo2016transport}, reaction-diffusion equations \cite{liu2019krylov} and Vlasov-Maxwell equations \cite{tao2018vlasov}.  For smooth solutions, the schemes we constructed  can successfully reduce the number of degrees of freedom (DoF) of unknown from $\mathcal{O}(h^{-d})$ to $\mathcal{O}(h^{-1}|\log_2h|^{d-1})$ for $d$-dimensional problem, where $h$ is the uniform mesh size in each dimension. Stability and conservation of standard DG methods can be maintained. Errors are only slightly deteriorated for smooth solutions. Adaptivity can be incorporated naturally to treat solutions with less smoothness or local structures.  {  This is the line of research we continue in this paper. }

However, the main bottleneck for the sparse grid DG scheme developed so far is that, it is mainly for \emph{``linear"} equations. Here,  \emph{``linear"} refers to either linear variable coefficient equations with given coefficients or  coefficients that have some specified dependence on the unknowns, e.g. Vlasov systems through self-consistent field. There  remain significant challenges to extend the methods to truly nonlinear problems in an efficient manner. For example,   previous work in the literature on adaptive multiresolution DG schemes resort to the finest scale for the actual time evolution for the nonlinear terms. Therefore, the computational cost is proportional to the number of cells on the finest level,  i.e. $\mathcal{O}(h^{-d})$ operations, and the reduced DoF in the solution representation is not realized in the actual computation.    For nonlinear equations, there is only limited literature on collocation or finite difference based sparse grid methods \cite{griebel1999adaptive}, and the order of accuracy of the schemes is low.  Sparse grid combination methods work for nonlinear problems, but they are less flexible in terms of adaptivity \cite{lastdrager2001sparse}.

{  This work serves as a proof-of-concept for   a systematic approach for  adaptive sparse grid DG method to solve  nonlinear PDEs in high dimensions. We  construct a scheme that can recover the computational complexity of the sparse grid method for smooth solutions.  This is different from previous approaches for adaptive multiresolution DG methods \cite{gerhard2013adaptive,gerhard2016mutidim,archibald2011adaptive,shelton_phdthesis}.
The evolution in our scheme is carried out by the multiresolution basis functions, and it can recover the computational efficiency of sparse grid approaches for smooth solutions in high dimensions. In particular, our methods never convert the multiscale coefficients to single scale coefficients.  This also raises many computational challenges as mentioned before.
}
{   To compute  nonlinear terms, we use sparse grid collocation methods introduced in \cite{tao2019collocation}, where the idea is to design new multiwavelets associated with interpolation on nested grids and gives a framework to design adaptive sparse grid collocation onto arbitrary high order piecewise polynomial spaces. Moreover, the algorithm converting between the point values and the derivatives to the coefficients of the hierarchical wavelets can be performed efficiently by  the fast wavelet transform \cite{tao2019collocation}.
In this paper, we approximate the nonlinear integral terms in the semi-discrete DG scheme  by integrating a linear combination of collocation wavelets up to desired order of accuracy. We analyze the truncation error of the DG scheme with the interpolation following the approach in \cite{DG4,huang2017quadrature}. 
It is shown that, for the interpolation we need polynomials of one degree higher than in the original DG function space, if one would like to preserve the order of accuracy for the original (standard or sparse grid) DG scheme.  }  Then, we compare different ways of sparse grid collocation methods and find that there exists some {  \emph{ instability}} for Lagrange interpolation when solving \eqref{eq:cons-law} especially for DG space with higher degrees of polynomial (see Table 2 and Table 3 in Section 4). This motivates us to apply Hermite interpolation, which is more stable than the Lagrange interpolation \cite{hagstrom2015hermite}.   {  Because the Alpert's multiwavelets and the interpolation multiwavelet bases are both global, the evaluation of the residual yields denser matrix than those obtained by standard local bases. We use fast matrix-vector multiplication  e.g. those developed for sparse grid methods \cite{shen2010efficient,zeiser2011fast}  to recover efficient computational scaling for such calculations.}


Another challenge we address in the paper is how to capture the shock and   entropy solutions to \eqref{eq:cons-law}. There are two approaches  in the literature. The first one is to apply limiters to control spurious oscillations and at the same time maintain accuracy in smooth regions, e.g., the minmod-type limiter \cite{DG2}, the moment-based limiter \cite{biswas1994parallel} and WENO limiter \cite{qiu2005runge}. However, it is quite difficult to impose   limiters in the sparse grid DG methods, due to the global feature of the basis functions. Also a preliminary calculation from us shows that the piecewise constant sparse grid DG method in multidimensions is not monotone. This motivates us to use the second approach, which is to add artificial viscosity, see e.g. \cite{bassi1995viscosity,persson2006shock,guermond2011viscosity,klockner2011viscous,kornelus2017scaling}. The idea is to add a diffusion term in the equation where the diffusion coefficient  vanishes in the smooth region and becomes non-zero near the shock. This can be achieved by  techniques such as entropy production \cite{guermond2011viscosity} or local smoothness indicator \cite{persson2006shock}.
We add an artificial viscosity term following the approach in \cite{bassi1995viscosity}. Based on the estimate of the magnitudes of coefficients of hierarchical basis functions in \cite{guo2016transport,guo2017adaptive}, we propose a smoothness indicator, which is built upon the inherent MRA and can automatically pick out the discontinuous regions. To improve the computational efficiency of our scheme, the implicit-explicit (IMEX) Runge-Kutta time integration is applied, where the nonlinear convection term is treated explicitly and the linear diffusion term is computed implicitly.

The rest of this paper is organized as follows. In Section 2, we review MRA associated with two sets of basis functions, i.e., the Alpert's multiwavelets \cite{alpert1993} and the interpolatory multiwavelets \cite{tao2019collocation}. The adaptive multiresolution DG scheme is constructed in Section 3 using both sets of multiwavelets. The numerical performance is validated  by linear advection equations, Burgers' equations and  KPP problems in Section 4. We conclude the paper in Section 5. The appendix collects the explicit formulas of the interpolatory multiwavelets used in this paper.

\section{MRA and multiwavelets}

In this section, we review MRA associated with piecewise polynomial space. 
We will   start with  Alpert's multiwavelets \cite{alpert1993} and then review the interpolatory multiwavelets \cite{tao2019collocation}.

\subsection{Alpert's multiwavelets}
\label{sec:alpert}

We first review MRA achieved by Alpert's basis functions in one dimension \cite{alpert1993}. We define a set of nested grids, where the $n$-th level grid $\Omega_n$ consists of $2^n$ uniform cells
\begin{equation*}
  I_{n}^j=(2^{-n}j, 2^{-n}(j+1)], \quad j=0, \ldots, 2^n-1
\end{equation*}
for $n \ge 0.$ For notational convenience, we also denote $I_{-1}=[0,1].$
The usual piecewise polynomial space of degree at most $k\ge1$ on the $n$-th level grid $\Omega_n$ for $n\ge 0$ is denoted by
\begin{equation}\label{eq:DG-space-Vn}
V_n^k:=\{v: v \in P^k(I_{n}^j),\, \forall \,j=0, \ldots, 2^n-1\}.
\end{equation}
Then, we have the nested structure
$$V_0^k \subset V_1^k \subset V_2^k \subset V_3^k \subset  \cdots$$
We can now define the multiwavelet subspace $W_n^k$, $n=1, 2, \ldots $ as the orthogonal complement of $V_{n-1}^k$ in $V_{n}^k$ with respect to the $L^2$ inner product on $[0,1]$, i.e.,
\begin{equation*}
V_{n-1}^k \oplus W_n^k=V_{n}^k, \quad W_n^k \perp V_{n-1}^k.
\end{equation*}
For notational convenience, we let
$W_0^k:=V_0^k$, which is the standard polynomial space of degree up to $k$ on $[0,1]$. Therefore, we have $V_n^k=\bigoplus_{0 \leq l \leq n} W_l^k$.

Now we  define a set of orthonormal basis associated with the space $W_l^k$. The case of mesh level $l=0$ is trivial. We use the normalized shifted Legendre polynomials in $[0,1]$ and denote the basis by $v^0_{i,0}(x)$ for $i=0,\ldots,k$.
When $l>0$, the orthonormal bases in $W_l^k$ are presented in \cite{alpert1993} and denoted by 
$$v^j_{i,l}(x),\quad i=0,\ldots,k,\quad j=0,\ldots,2^{l-1}-1.$$
The construction follows a repeated Gram-Schmidt process and the explicit expression of the multiwavelet basis functions are provided in Table 1 in \cite{alpert1993}.
Note that such multiwavelet bases retain the orthonormal property of wavelet bases for different mesh levels, i.e.,
\begin{equation}\label{ortho1d}
\int_0^1 v^j_{i,l}(x)v^{j'}_{i',l'}(x)\,dx=\delta_{ii'}\delta_{ii'}\delta_{jj'}.
\end{equation}
and the support of $v^j_{i,l}$ is in $I_{l-1}^j$.

Multidimensional case when $d>1$ follows from a tensor-product approach. First we recall some basic notations. For a multi-index $\mathbf{\alpha}=(\alpha_1,\cdots,\alpha_d)\in\mathbb{N}_0^d$, where $\mathbb{N}_0$  denotes the set of nonnegative integers, the $l^1$ and $l^\infty$ norms are defined as 
$$
|\bal|_1:=\sum_{m=1}^d \alpha_m, \qquad   |\bal|_\infty:=\max_{1\leq m \leq d} \alpha_m.
$$
The component-wise arithmetic operations and relational operations are defined as
$$
\bal \cdot \bb :=(\alpha_1 \beta_1, \ldots, \alpha_d \beta_d), \qquad c \cdot \bal:=(c \alpha_1, \ldots, c \alpha_d), \qquad 2^\bal:=(2^{\alpha_1}, \ldots, 2^{\alpha_d}),
$$
$$
\bal \leq \bb \Leftrightarrow \alpha_m \leq \beta_m, \, \forall m,\quad
\bal<\bb \Leftrightarrow \bal \leq \bb \textrm{  and  } \bal \neq \bb.
$$

By making use of the multi-index notation, we denote by $\bl=(l_1,\cdots,l_d)\in\mathbb{N}_0^d$ the mesh level in a multivariate sense. We  define the tensor-product mesh grid $\Omega_\bl=\Omega_{l_1}\otimes\cdots\otimes\Omega_{l_d}$ and the corresponding mesh size $h_\bl=(h_{l_1},\cdots,h_{l_d}).$ Based on the grid $\Omega_\bl$, we denote  $I_\bl^\bj=\{\bx:x_m\in(h_mj_m,h_m(j_{m}+1)),m=1,\cdots,d\}$ as an elementary cell, and 
$$\bV_\bl^k:=\{\bv: {  \bv \in Q^k(I^{\bj}_{\bl})}, \,\,  \bzero \leq \bj  \leq 2^{\bl}-\bone \}= V_{l_1,x_1}^k\times\cdots\times  V_{l_d,x_d}^k$$
as the tensor-product piecewise polynomial space, where $Q^k(I^{\bj}_{\bl})$ represents the collection of polynomials of degree up to $k$ in each dimension on cell $I^{\bj}_{\bl}$. 
If we use equal mesh refinement of size $h_N=2^{-N}$ in each coordinate direction, the  grid and space will be denoted by $\Omega_N$ and $\bV_N^k$, respectively.  

Based on a tensor-product construction, the multidimensional increment space can be  defined as
$$\bW_\bl^k=W_{l_1,x_1}^k\times\cdots\times  W_{l_d,x_d}^k.$$
Therefore,  the standard tensor-product piecewise polynomial space on $\Omega_N$ can be written as
\begin{equation}\label{eq:hiere_tp}
\bV_N^k=\bigoplus_{\substack{ |\bl|_\infty \leq N\\\bl \in \mathbb{N}_0^d}} \bW_\bl^k,
\end{equation}
while the sparse grid approximation space in \cite{wang2016elliptic} is
\begin{equation}
\label{eq:hiere_sg}
\hat{\bV}_N^k:=\bigoplus_{\substack{ |\bl|_1 \leq N\\\bl \in \mathbb{N}_0^d}}\bW_\bl^k \subset \bV_N^k.
\end{equation}
The dimension of $\hat{\bV}_N^k$ scales as $O((k+1)^d2^NN^{d-1})$ \cite{wang2016elliptic}, which is significantly less than that of $\bV_N^k$ with exponential dependence on $Nd$. The approximation results for $\hat{\bV}_N^k$ are discussed in \cite{wang2016elliptic,guo2016transport}, which has a stronger smoothness requirement than the traditional $\bV_N^k$ space. In this paper, we will not require the numerical solution to be in $\hat{\bV}_N^k$, but rather in $\bV_N^k$ and to be chosen adaptively similar to \cite{guo2017adaptive}.

The basis functions in multidimensions are defined as
\begin{equation}\label{eq:multidim-basis}
v^\bj_{\bi,\bl}(\bx) := \prod_{m=1}^d v^{j_m}_{i_m,l_m}(x_m),
\end{equation}
for $\bl \in \mathbb{N}_0^d$, $\bj \in B_\bl := \{\bj\in\mathbb{N}_0^d: \,\mathbf{0}\leq\bj\leq\max(2^{\bl-\mathbf{1}}-\mathbf{1},\mathbf{0}) \}$ and $\mathbf{1}\leq\bi\leq \bk+\mathbf{1}$. The orthonormality of the bases can be established by \eqref{ortho1d}.

{ 
There exists the estimate of the coefficients of hierarchical basis functions in \cite{guo2016transport,guo2017adaptive}, which is also a classical result from wavelet theory and holds in a more general context \cite{cohen2003}. It is shown in \cite{guo2016transport,guo2017adaptive} that for a function {  $u \in H^{p+1}(\Omega)$},
\begin{equation}\label{eq:estimate-coeff-sum-j}
     (\sum_{\substack{ \mathbf{0}\leq \mathbf{j} \leq \max(2^{\mathbf{l}-1}-\mathbf{1},\mathbf{0})\\ \mathbf{1}\leq\bi\leq \bk+\mathbf{1}}}|u^\bj_{\bi,\bl}|^2  )^{1/2} \leq C2^{-(q+1)|\bl|_1}|u|_{H^{p+1}(\Omega)},    
\end{equation}
where    $q=\min\{p,k\},$ and $C$ is a constant independent of mesh level $\bl$. Therefore, by assuming that $u\in W^{p+1,\infty}(I_{\bl}^{\bj})$, we can obtain a local estimate   in each element on mesh level $\bl$: for any index $\bj$,
\begin{equation}
     (\sum_{\mathbf{1}\leq\bi\leq \bk+\mathbf{1}} |u^\bj_{\bi,\bl}|^2  )^{1/2} \leq C2^{-(q+\half)|\bl|_1} |u|_{W^{p+1,\infty}(\Omega)}.
\end{equation}
Therefore, for sufficiently smooth functions, the coefficients should decay like
\begin{equation}\label{eq:smooth-coeff-l2-norm}
     (\sum_{\mathbf{1}\leq\bi\leq \bk+\mathbf{1}} |u^\bj_{\bi,\bl}|^2  )^{1/2} \sim 2^{-(k+\half)|\bl|_1}.
\end{equation}
This will be used in the construction of the smoothness indicator in Section 3.
}

\subsection{Interpolatory multiwavelets}\label{subsec:interp-basis}
Alpert's multiwavelets and the space $W_l^k$ are constructed so that they correspond to the difference of the $L^2$ projection on adjacent levels. The idea of the sparse grid collocation basis proposed in \cite{tao2019collocation} is to switch the operator to be interpolation on nested grids. Below, we will outline the construction. Denote the set of interpolation points in the interval $I=[0,1]$ at mesh level 0 by $X_0 = \{ x_i \}_{i=0}^P\subset I$. Here, the number of points in $X_0$ is $(P+1)$. Then the interpolation points at mesh level $n\ge1$, $X_n$ can be obtained correspondingly as
\begin{equation*}
    X_n = \{ x_{i,n}^j := 2^{-n}(x_i+j), \quad i=0,\dots,P, \quad j=0,\dots,2^{n}-1 \}.
\end{equation*}
We require the points to be nested, i.e. 
\begin{equation}
\label{nestpts}
    X_0 \subset X_1 \subset X_2 \subset X_3 \subset \cdots.
\end{equation}
to save computational cost. 
This can be achieved by requiring  $X_0\subset X_1$, and then one can deduce \eqref{nestpts} easily.

Given the nodes, we define the basis functions on the $0$-th level grid as  Lagrange $(K=0)$ or Hermite $(K\ge1)$ interpolation polynomials of degree $\le M:=(P+1)(K+1)-1$ which satisfy the property:
\begin{equation*}
    \phi_{i,l}^{(l')}(x_{i'}) = \delta_{ii'}\delta_{ll'},
\end{equation*}
for $ i,i'=0,\dots,P$ and $l,l'=0,\dots,K$. It is easy to see that
$
\textrm{span} \{ \phi_{i,l}, \quad i=0,\dots,P, \quad l=0,\dots,K \}=V_0^M.  
$
The constants $P, K, M$ will be specified later on in the paper.
With the basis function at mesh level 0, we can define basis function at mesh level $n\ge1$:
\begin{equation*}
  \phi_{i,l,n}^j:=2^{-nl}\phi_{i,l}(2^nx-j), \quad i=0,\dots,P, \quad l=0,\dots,K, \quad j=0,\dots,2^n-1 
\end{equation*}
which is a complete basis set for $V_n^M.$


Next, we introduce the hierarchical representations. Define $\tilde{X}_0 := X_0$ and {  $\tilde{X}_n := X_n\backslash X_{n-1}$} for $n\ge1$, then we have the decomposition
\begin{equation*}
    X_n = \tilde{X}_0 \cup \tilde{X}_1 \cup \cdots \cup \tilde{X}_n .
\end{equation*}
Denote the points in $\tilde{X}_1$ by $\tilde{X}_1=\{ \tilde{x}_i \}_{i=0}^P$. Then the points in $\tilde{X}_n$ for $n\ge1$ can be represented by
\begin{equation*}
    \tilde{X}_n = \{ \tilde{x}_{i,n}^j:=2^{-(n-1)}(\tilde{x}_i+j), \quad  i=0,\dots,P, \quad j=0,\dots,2^{n-1}-1 \}.
\end{equation*}


For notational convenience, we let $\tilde{W}_0^M:=V_0^M.$ The increment function space $\tilde{W}_n^M$ for $n\ge1$ is introduced as a function space    that satisfies
\begin{equation}\label{eq:func-space-sum}
    V_n^M = V_{n-1}^M \oplus \tilde{W}_n^M,
\end{equation}
and is defined through the multiwavelets
 $\psi_{i,l} \in V_1^M$ that satisfies
\begin{equation*}
    \psi_{i,l}^{(l')}(x_{i'}) = 0, \quad \psi_{i,l}^{(l')}(\tilde{x}_{i'}) = \delta_{i,i'}\delta_{l,l'},
\end{equation*}
for $i,i'=0,\dots,P$ and $l,l'=0,\dots,K$. Here the superscript $(l')$ denotes the $l'$-th order derivative. Then  $W_n^M$ is given by
\begin{equation*}
    \tilde{W}_n^M = \textrm{span} \{ \psi_{i,l,n}^j, \quad i = 0,\dots,P, \quad l=0,\dots,K, \quad j=0,\dots,2^{n-1}-1 \}
\end{equation*}
where $\psi_{i,l,n}^j(x) := 2^{-(n-1)l}\psi_{i,l}(2^{n-1}x-j)$.  For completeness, we list the basis functions used in this paper in the appendix. 

The construction above has close connection with interpolation operators.   For a given function $f(x)\in C^{K+1}(I)$, we define $\mathcal{I}^{P,K}_{N}[f]$ as the standard Hermite interpolation on $V^{M}_{N},$ and have the representation
\begin{align*}
	\mathcal{I}^{P,K}_{N}[f](x) 
	= \sum_{n=0}^{N} \sum _{j=0}^{\max(2^{n-1}-1,0)}\sum_{l=0}^K \sum_{i=0}^{P} b_{i,l,n}^{j} \psi^{j}_{i,l,n}(x).
\end{align*}
Clearly, $(\mathcal{I}^{P,K}_{n}-\mathcal{I}^{P,K}_{n-1})[f](x) \in  \tilde{W}_n^M.$ 
The algorithm converting between the point values and the derivatives $\{ f^{(l)}(x_{i,n}^j) \}$ to hierarchical coefficients $\{ b_{i,l,n}^{j} \}$ is given in \cite{tao2019collocation}, and by a standard  argument in fast wavelet transform, can be performed in $O(M2^n)$ flops.

The multidimensional construction follows similar lines as in Section \ref{sec:alpert}. We let 
$$\tilde{\bW}_\bl^M=\tilde{W}_{l_1,x_1}^M\times\cdots\times  \tilde{W}_{l_d,x_d}^M,$$
Therefore,   
\begin{equation*}
\bV_N^M=\bigoplus_{\substack{ |\bl|_\infty \leq N\\\bl \in \mathbb{N}_0^d}} \tilde{\bW}_\bl^M,
\end{equation*}
while the sparse grid approximation space is
\begin{equation*}
\hat{\bV}_N^M=\bigoplus_{\substack{ |\bl|_1 \leq N\\\bl \in \mathbb{N}_0^d}}\tilde{\bW}_\bl^M.
\end{equation*}
Note that the construction by Alpert's multiwavelet and the interpolatory multiwavelet gives the same sparse grid space. 
Finally, the interpolation operator in multidimension $\mathcal{I}^{P,K}_{N}: C^{K+1}(\Omega)\rightarrow \mathbf{V}^{M}_{N}$:
\begin{align*}
	\mathcal{I}^{P,K}_{N}[f](\mathbf{x}) 
	= \sum_{ \substack{ \abs{\mathbf{n}}_{\infty}\leq N \\ \mathbf{0}\leq \mathbf{j} \leq \max(2^{\mathbf{n}-1}-\mathbf{1},\mathbf{0}) \\ \mathbf{0}\leq \mathbf{l}\leq \mathbf{K} \\ \mathbf{0}\leq \mathbf{i}\leq \mathbf{P}  } } b^{\mathbf{j}}_{\mathbf{i}, \mathbf{l}, \mathbf{n}} \psi^{\mathbf{j}}_{\mathbf{i}, \mathbf{l}, \mathbf{n}} (\mathbf{x}),
\end{align*}
where the multidimensional basis functions $\psi^{\mathbf{j}}_{\mathbf{i}, \mathbf{l}, \mathbf{n}} (\mathbf{x})$ are defined in the same approach as \eqref{eq:multidim-basis} by tensor products:
\begin{equation}\label{eq:multidim-basis-interpolation}
	\psi^{\mathbf{j}}_{\mathbf{i}, \mathbf{l}, \mathbf{n}} (\mathbf{x}) := \prod_{m=1}^d \psi^{j_m}_{i_m,l_m,n_m}(x_m),
\end{equation}
If the space is switched from $\mathbf{V}^{M}_{N}$ to some subset of $\mathbf{V}^{M}_{N},$ e.g. the sparse grid space $\hat{\bV}_N^M$ or some other subset of $\mathbf{V}^{M}_{N}$ that is dynamically chosen, the interpolation operator can be defined accordingly, taking only multiwavelet basis functions that belong to that space.

\section{Adaptive multiresolution DG evolution algorithm}

In this section, we will describe the adaptive multiresolution DG scheme for \eqref{eq:cons-law}. We will first introduce the DG scheme with multiresolution interpolation. The accuracy requirement for the interpolation operator is studied by local truncation error analysis. We then describe the adaptive strategy. Finally, the artificial viscosity is introduced based on the estimate of the coefficients of the hierarchical basis functions.

\subsection{DG scheme with multiresolution interpolation}

First, we review some basis notations about meshes.  Let $N$ be the maximum mesh level and $T_h$ be the collection of all elementary cell $I_N^{\bj}$, $0\le j_m\le 2^N-1$, $\forall m=1,\dots,d$. Define $\Gamma_h:=\bigcup_{K\in T_h} \partial K$ be the union of all the interfaces for all the elements in $T_h.$  Here, for simplicity,  we formulate the scheme with   periodic boundary conditions, while we keep in mind other boundary conditions can be treated in the DG framework as well.

The semi-discrete DG scheme for the scalar conservation law reads as \cite{DG2}
\begin{equation}\label{eq:DG-semi}
    \sum_{K\in T_h}\int_{K} (u_h)_t v_h dx - \sum_{K\in T_h}\int_{K}f(u_h)\cdot\nabla v_h dx + \sum_{e\in\Gamma_h}\int_{e}\widehat{f\cdot n_K}(u_h) v_h ds = 0.
\end{equation}
Here, $u_h$ is the numerical solution and $v_h$ is the test function. The numerical flux $\widehat{f\cdot n_K}(u_h)\equiv \widehat{f\cdot n_K}(u_h^{int}, u_h^{ext})$ is taken to be the global Lax-Friedrichs flux:
\begin{equation}\label{eq:global-LxF}
    \widehat{f\cdot n_K}(a, b) = \half \left(f(a) + f(b)\right)\cdot n_K - \half C(b-a),
\end{equation}
where $C=\max_{u}\abs{n_K\cdot f'(u)}$ and the maximum is taken over the whole domain. {  Note that the local Lax-Friedrichs flux can also be used with additional efforts in numerical interpolation.}
$u_h, v_h$ belong to the same function space $\bV^k.$  If $\bV^k=\bV_N^k,$ we recover the standard (or full grid) DG method.   If $\bV^k=\hat{\bV}_N^k,$ we obtain the sparse grid DG method. In this paper, we will take $\bV^k$ as   a subset of $\bV_N^k$ that is chosen adaptively as outlined in Section \ref{subsec:adapt}. 

In DG methods, the integrals over elements and edges are often approximated by numerical quadrature rules on each cell \cite{DG4}. However, in sparse grid DG method, this naive approach would result in computational cost that is proportional to the number of fundamental elements, i.e., $\mathcal{O}(h^{-d})$, and is still subject to the curse of dimensionality. 
To evaluate the integrals over elements and edges more efficiently with a cost proportional to the DoF of the underlying finite element space, we   interpolate the nonlinear function $f(u_h)$ by using the multiresolution Lagrange (or Hermite) interpolation basis functions introduced in Section \ref{subsec:interp-basis}. Therefore, the semi-discrete DG scheme with interpolation is
\begin{equation}\label{eq:DG-semi-interp}
    \sum_{K\in T_h}\int_{K} (u_h)_t v_h dx - \sum_{K\in T_h}\int_{K}\mathcal{I}[f(u_h)]\cdot\nabla v_h dx + \sum_{e\in\Gamma_h}\int_{e}\mathcal{I}[\widehat{f\cdot n_K}(u_h)]v_h ds = 0,
\end{equation}
where $\mathcal{I}[\cdot]$ is a multiresolution interpolation operator onto some finite element space  with the same multiresolution structure as $\bV^k,$ but of polynomial degree $M$. The choice of  $\mathcal{I}[\cdot]$ will be specified later, which plays important roles in numerical stability and accuracy. Note that the numerical flux $\widehat{f\cdot n_K}(u_h)$ is only defined at edges, thus it remains to clarify the meaning of the interpolation $\mathcal{I}[\widehat{f\cdot n_K}(u_h)]$. Since we use the global Lax-Friedrichs flux \eqref{eq:global-LxF}, we have
\begin{align*}
    \mathcal{I}[\widehat{f\cdot n_K}(u_h^{int}, u_h^{ext})] &= \half \left(\mathcal{I}[f(u_h^{int})] + \mathcal{I}[f(u_h^{ext})]\right)\cdot n_K - \half C(\mathcal{I}[u_h^{ext}]-\mathcal{I}[u_h^{int}]) \\
    &= \half \left(\mathcal{I}[f(u_h^{int})] + \mathcal{I}[f(u_h^{ext})]\right)\cdot n_K - \half C(u_h^{ext}-u_h^{int})
\end{align*}
due to the linearity of the interpolation operator $\mathcal{I}[\cdot]$. Therefore, we only need to obtain the interpolation $\mathcal{I}[f(u_h)]$ and then read the value on two sides of the edges to obtain $\mathcal{I}[f(u_h^{int})]$ and $\mathcal{I}[f(u_h^{ext})]$. Now, we discuss about numerical implementation. First, we  read the (derivative) values of $u_h$, which is a linear combination of Alpert's basis functions at the chosen interpolation points. Second, we calculate the (derivative) values of $f(u_h)$ at these interpolation points. Last, we transfer the (derivative) values to coefficients of interpolation basis, by using the algorithm introduced in \cite{tao2019collocation}. At this point, the numerical integrations can be performed through a fast matrix-vector product as in \cite{shen2010efficient}. We remark that the computational cost does not increase too much compared to the multiresolution DG schemes for linear equations introduced in \cite{guo2017adaptive}. The cost of the transformation from the (derivative) values to hierarchical coefficients is only \emph{linearly} dependent on the dimension $d$ \cite{tao2019collocation}.

Now we discuss the choice of $\mathcal{I}[\cdot].$ To preserve the accuracy of the original DG scheme \eqref{eq:DG-semi}, it is required that the interpolation operator $\mathcal{I}[\cdot]$ reaches certain accuracy. Following \cite{DG4}, we rewrite the weak formulation \eqref{eq:DG-semi-interp} in the ODE form as 
\begin{equation}
	\frac{du_h}{dt} = L_h(u_h),
\end{equation}
where $L_h(u)$ is an operator onto $\bV^k,$ which  is a discrete approximation of $-\nabla\cdot f(u)$ and satisfies
\begin{equation}\label{eq:def-Lh}
	\sum_{K\in T_h}\int_{K} L_h(u_h)v_hdx = \sum_{K\in T_h}\int_{K}\mathcal{I}[f(u_h)]\cdot\nabla v_h dx - \sum_{e\in\Gamma_h}\int_{e}\mathcal{I}[\widehat{f\cdot n_K}(u_h)] v_h ds.
\end{equation} To illustrate the ideas, we only consider the full grid or sparse grid DG methods, i.e. $\bV^k=\bV_N^k$ or $\bV^k=\hat{\bV}_N^k.$ For adaptive methods, similar intuitive arguments can be made, but rigorous proof is much harder.
Using   similar error estimates techniques in \cite{DG4,huang2017quadrature}, we have the following proposition on local truncation error:
\begin{proposition}[Accuracy of semi-discrete DG scheme with interpolation]\label{prop:accurate-interp}
	Assume that the DG finite element space (standard or sparse) has polynomials up to degree $k,$ if the interpolation operator in \eqref{eq:DG-semi-interp} has the accuracy of $h^{k+2}$ (standard) or $\abs{\log_2h}^{d}h^{k+2}$ (sparse) for sufficiently smooth functions, then the truncation error of the semi-discrete DG scheme with interpolation \eqref{eq:DG-semi-interp} is of order $h^{k+1}$ (standard) or $\abs{\log_2h}^{d}h^{k+1}$ (sparse). To be more precise, for sufficiently smooth function $u$, the standard DG with interpolation \eqref{eq:DG-semi-interp} has the truncation error:
	\begin{equation}\label{eq:truncation-regular}
		\norm{L_h(u)+\nabla\cdot f(u)}_{L^2(\Omega)}\le C h^{k+1},
	\end{equation}
	and the sparse grid DG with interpolation \eqref{eq:DG-semi-interp} has the truncation error:
	\begin{equation}\label{eq:truncation-sparse}
		\norm{L_h(u)+\nabla\cdot f(u)}_{L^2(\Omega)}\le C \abs{\log_2h}^{d}h^{k+1}.
	\end{equation}
    Here, the constant $C$ may depend on the solution, but does not depend on $h$.
\end{proposition}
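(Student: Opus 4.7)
The plan is to decompose the truncation error into an ``interpolation-free'' part and an ``interpolation-correction'' part, and estimate each separately. Let $\tilde L_h(u)\in \bV^k$ denote the operator obtained from \eqref{eq:def-Lh} with $\mathcal{I}[\cdot]$ removed, i.e.\ with $f(u)$ in place of $\mathcal I[f(u)]$. Then I write
\begin{equation*}
 L_h(u)+\nabla\!\cdot\! f(u) \;=\; \bigl(\tilde L_h(u)+\nabla\!\cdot\! f(u)\bigr) \;+\; \bigl(L_h(u)-\tilde L_h(u)\bigr),
\end{equation*}
and bound the two summands in $L^2(\Omega)$.

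For the first summand, I would use that when $u$ is smooth the single-valued flux makes the jump terms vanish; integrating by parts elementwise shows that $\tilde L_h(u)$ is precisely the $L^2$-projection of $-\nabla\!\cdot\! f(u)$ onto $\bV^k$. The desired bound $\norm{\tilde L_h(u)+\nabla\!\cdot\! f(u)}_{L^2}\le Ch^{k+1}$ in the full-grid case (resp.\ $C|\log_2 h|^d h^{k+1}$ in the sparse-grid case) then follows from the standard $L^2$-projection estimates for $\bV_N^k$ and $\hat{\bV}_N^k$ that are already recorded in \cite{DG4,wang2016elliptic,guo2016transport}.

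For the second summand, since $L_h(u)-\tilde L_h(u)\in \bV^k$, I would characterize its $L^2$ norm by duality, testing against $v_h=L_h(u)-\tilde L_h(u)$:
\begin{equation*}
 \norm{L_h(u)-\tilde L_h(u)}_{L^2}^2 = \sum_{K}\int_K (\mathcal{I}[f(u)]-f(u))\!\cdot\!\nabla v_h\,dx - \sum_e \int_e \bigl(\mathcal{I}[\widehat{f\cdot n_K}(u)]-\widehat{f\cdot n_K}(u)\bigr) v_h\,ds.
\end{equation*}
On each element I apply Cauchy--Schwarz together with the DG inverse inequality $\norm{\nabla v_h}_{L^2(K)}\le C h^{-1}\norm{v_h}_{L^2(K)}$, and on each edge the trace inequality $\norm{v_h}_{L^2(\partial K)}\le Ch^{-1/2}\norm{v_h}_{L^2(K)}$. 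The hypothesis that $\mathcal I$ delivers accuracy $h^{k+2}$ (standard) or $|\log_2 h|^d h^{k+2}$ (sparse) converts the interpolation residuals on volume and edge integrals into the target rate $h^{k+1}$ or $|\log_2 h|^d h^{k+1}$ after absorbing the $h^{-1}$ (resp.\ $h^{-1/2}$) factors and then dividing by $\norm{v_h}_{L^2}$.

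The main obstacle I expect is the careful bookkeeping of the logarithmic factors in the sparse-grid case. Inverse and trace inequalities on $\hat{\bV}_N^k$ require the anisotropic hierarchical structure to be handled level-by-level, and the edge interpolation residual must be controlled in a way that is compatible with the sparse grid space; I would follow the dyadic-level decomposition used in \cite{wang2016elliptic,guo2016transport} to sum the contributions across levels $|\bl|_1\le N$, producing at worst a single extra $|\log_2 h|^d$ factor consistent with \eqref{eq:truncation-sparse}. A minor but technical point is that $\mathcal I$ uses a degree-$M$ space richer than $\bV^k$, so I must verify that the interpolation accuracy assumed in the proposition is actually realizable by the Lagrange/Hermite construction of Section~\ref{subsec:interp-basis} for a suitable choice of $P, K$; this is where the ``one degree higher'' requirement mentioned in the introduction enters, and it dictates the eventual parameter choices in the numerical section.
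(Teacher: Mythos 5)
Your proposal is correct and takes essentially the same route as the paper: since flux consistency for smooth $u$ gives $\tilde L_h(u)=-\mathbb{P}(\nabla\cdot f(u))$, your splitting coincides exactly with the paper's $e_1+e_2$ decomposition, and your duality argument with Cauchy--Schwarz, the inverse inequality, and the trace inequality on the interpolation residual is precisely the paper's estimate of $e_1$. The only difference is presentational (you name the interpolation-free DG operator rather than the $L^2$ projection), so nothing further is needed.
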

\begin{proof}
	To save space, we only show the proof for full grid DG space $\bV^k=\bV_N^k$. Similar technique also applies to the sparse grid DG space using projection error estimates in \cite{guo2016transport}. 

    We denote the standard $L^2$ projection operator onto the standard DG finite element space by $\mathbb{P}$, then
	\begin{equation}
		\norm{L_h(u) + \nabla\cdot f(u)}_{L^{2}(\Omega)} \le e_1 + e_2,
	\end{equation}
	where 
	\begin{equation*}
		e_1 := \norm{L_h(u) + \mathbb{P}(\nabla\cdot f(u))}_{L^{2}(\Omega)},
	\end{equation*}
	and
	\begin{equation*}
		e_2 := \norm{\mathbb{P}(\nabla\cdot f(u)) - \nabla\cdot f(u)}_{L^{2}(\Omega)}.
	\end{equation*}
	The estimate for $e_2$ is trivial using projection properties:
	\begin{equation}\label{eq:estimate-e2}
		e_2 \le Ch^{k+1}\norm{\nabla\cdot f(u)}_{H^{k+1}(\Omega)}.
	\end{equation}	

	To estimate $e_1,$ we consider  any test function $v_h$ in DG space, and obtain  
	\begin{align*}
		&\sum_{K\in T_h}\int_{K}(L_h(u)+\mathbb{P}(\nabla\cdot f(u)))v_h =\sum_{K\in T_h}\int_{K}(L_h(u)+\nabla\cdot f(u))v_h \\
		={}& \sum_{K\in T_h}\int_K\mathcal{I}[f(u)]\cdot\nabla v_h - \sum_{e\in\Gamma_h}\int_{e} \mathcal{I}[\widehat{f\cdot n_K}(u)]\cdot n_K v_hds \\
		& - \sum_{K\in T_h}\int_K(f(u)\cdot\nabla v_h) + \sum_{e\in\Gamma_h}\int_{e} f(u)\cdot n_K v_hds \\
		={}& \sum_{{K\in T_h}}\int_K(\mathcal{I}[f(u)]-f(u))\cdot\nabla v_h - \sum_{e\in\Gamma_h}\int_{e} (\mathcal{I}[f(u)\cdot n_K]-{f}(u)\cdot n_K) v_hds \\
		\le{}& \norm{\mathcal{I}[f(u)]-f(u)}_{L^2{(\Omega)}}\norm{\nabla v_h}_{L^2(\Omega)} + \norm{\mathcal{I}[f(u)]-{f}(u)}_{L^2(\Gamma_h)}\norm{v_h}_{L^2(\Gamma_h)} \\
		\le{}& Ch^{k+2}h^{-1}\norm{v_h}_{L^2(\Omega)} + Ch^{-\frac12}h^{k+2}h^{-\frac12}\norm{v_h}_{L^2(\Omega)} \\
		={}& Ch^{k+1}\norm{v_h}_{L^2(\Omega)}.
	\end{align*}
    Here we use the multiplicative trace inequality and the inverse inequality, see e.g. Lemma 2.1 and Lemma 2.3 in \cite{huang2017quadrature}.
	We take $v_h$ to be $(L_h(u)+\mathbb{P}(\nabla\cdot f(u)))$ in the  inequality above and have
	\begin{equation*}
		\norm{L_h(u)+\mathbb{P}(\nabla\cdot f(u))}^2_{L^2(\Omega)} \le C h^{k+1} \norm{L_h(u)+\mathbb{P}(\nabla\cdot f(u))}_{L^2(\Omega)}
	\end{equation*}
	and eventually arrive at
	\begin{equation}\label{eq:estimate-e1}
		e_1 = \norm{L_h(u)+\mathbb{P}(\nabla\cdot f(u))}_{L^2(\Omega)}\le C h^{k+1}.
	\end{equation}	
    Combining \eqref{eq:estimate-e1} and \eqref{eq:estimate-e2}, we have the estimate for the truncation error \eqref{eq:truncation-regular}.
\end{proof}

\begin{rem}
 	From the proposition above, we find that, {  for the interpolation we need polynomials of one degree higher than in the original DG function space,} if one would like to preserve the order of accuracy for the original (standard or sparse grid) DG scheme, i.e. we shall require $M \ge k+1.$ For example, if we take  quadratic polynomials  for the DG space, then it is required to apply cubic interpolation operator (Lagrange or Hermite interpolation) to treat the nonlinear terms. {  From our numerical test, it seems that it is not a necessary condition for the standard DG method, but it is necessary for the sparse grid DG method.}
 \end{rem} 
    
\begin{rem}
	{  
	For the standard DG method, the collocation in our scheme for the volumes integrals should be equivalent to some quadrature formula depending on which interpolation operator is used. However, our method is not standard for the evaluation of the interfacial terms, which use point values (or derivatives) that may \emph{inside} the elements and not just on the interface.
	} 	
\end{rem} 
 
    In Proposition \ref{prop:accurate-interp}, we only estimate the truncation error, and this is far from {  a rigorous error estimate} that takes into account  stability. In   numerical experiments, we observe that the standard DG is stable with the Lagrange interpolation. However, the sparse grid DG with Lagrange interpolation is {  \emph{ unstable}} and will blow up with very fine mesh for polynomials of high degrees (see the numerical results in Table \ref{ex:table-2D-Burgers-Sparse-Lagr-inner} and Table \ref{ex:table-2D-Burgers-Sparse-Lagr-interface} in Section 4). With Hermite interpolation, the sparse grid DG scheme is more stable and produce satisfactory convergence rate (see Table \ref{ex:table-2D-Burgers-Sparse-Herm} in Section 4). Actually, for standard DG with quadrature rules applied in each element, if the truncation error satisfies the required order of accuracy, then the convergence and error estimate can be guanranteed \cite{huang2017quadrature}. However, it is not true for the sparse grid DG method from our numerical experiments. This indicates that the standard DG method is more stable than the sparse grid DG method in this sense. We also remark that, since the interpolation operator introduced here is global but not local, the approach in \cite{huang2017quadrature} would probably fail to obtain the rigorous error estimate here. We will leave the detailed analysis as future work.

\subsection{Adaptivity}\label{subsec:adapt}

In this section, we review the adaptive procedure introduced in \cite{bokanowski2013adaptive,guo2017adaptive} to determine the space $\bV^k.$ The method is very similar to those in \cite{bokanowski2013adaptive,guo2017adaptive}, except that two sets of basis functions are involved and they are adaptively chosen at the same time.

In the adaptive DG algorithm, we specify the maximum mesh level $N$ and an accuracy threshold $\epsilon>0$. 
The same adaptive multiresolution projection method in \cite{guo2017adaptive} is applied here as the numerical initial condition for DG schemes. The error indicator using $L^2$ norm is used. The details are omitted  and we refer readers to Algorithm 1 in \cite{guo2017adaptive}.

The scheme is implemented by hash table as the underlying data structure. We now introduce the concepts of child, parent and leaf elements. If an element $V_{\bl'}^{\bj'}$ with $\abs{\bl'}_{\infty}\le N$ satisfies the condition that there exists an integer $m$ such that $1\le m\le d$ and $\bl'=\bl+\textbf{e}_m$, where $\textbf{e}_m$ denotes the unit vector in the $x_m$ direction, and the support of $V_{\bl'}^{\bj'}$ is within that of $V_{\bl}^{\bj}$, then $V_{\bl'}^{\bj'}$ is called a child element of $V_{\bl}^{\bj}$. Accordingly, element $V_{\bl}^{\bj}$ is called a parent element of $V_{\bl'}^{\bj'}$. If an element does not have its child element in the hash table, then we call it a leaf element.

The time evolution consists of four steps. 
The first step is the prediction step, which means given the hash table $H$ that stores the numerical solution $u_h$ at time step $t^n$ and the associated leaf table $L$, we need to predict the location where the details becomes significant at the next time step $t^{n+1}$, then add more elements in order  to capture the fine structures.  
We solve for $u_h\in \bV_{N,H}^k$ from $t^n$ to $t^{n+1}$ using a cheap solver, e.g. the forward Euler discretization.  Here, the interpolation operator $\mathcal{I}[\cdot]$ is determined by accuracy requirement, and has the same multiresolution structure as determined by the  hash table $H$ corresponding to the numerical solution $u_h$. The predicted solution at $t^{n+1}$ is denoted by $u_h^{(p)}$. Note that to save cost, that the artificial viscosity term as introduced in Section \ref{sec:viscosity} does not need to be included in the prediction step.

The second step is the refinement step according to the predicted solution $u_h^{(p)}$. We traverse the hash table $H$ and if an element $V_\bl^\bj$ satisfies the refinement criteria
\begin{equation}\label{eq:crit}
    ( \sum_{\mathbf{1}\leq\bi\leq \bk+\mathbf{1}} |u^\bj_{\bi,\bl}|^2  )^{1/2} \ge \epsilon,
\end{equation}
where $u^\bj_{\bi,\bl}$ denotes the hierarchical coefficient corresponding to the basis $v^\bj_{\bi,\bl}(\bx),$ i.e. $u^\bj_{\bi,\bl}= \int_{\Omega}u(\bx)v^\bj_{\bi,\bl}(\bx)d\bx.$
\eqref{eq:crit} indicates that such an element becomes significant at the next time step, then we need to refine the mesh by adding its children elements to $H$. The detailed procedure is described as follows. For a child element $V_{\bl'}^{\bj'}$ of $V_\bl^\bj$, if it has been already added to $H$, i.e. $V_{\bl'}^{\bj'}\in H$, we do nothing; if not, we add the element $V_{\bl'}^{\bj'}$ to $H$ {  and set the} associated detail coefficients $u^{\bj'}_{\bi,\bl'}=0,\,\mathbf{1}\leq\bi\leq\bk+\mathbf{1}$. Moreover, we need to increase the number of children by one for all elements that has $V_{\bl'}^{\bj'}$ as its child element and remove the parent elements of $V_{\bl'}^{\bj'}$ from the leaf table   if they have been added. Finally, we obtain a larger hash table $H^{(p)}$ and the associated approximation space $\bV_{N,H^{(p)}}^k$ and the leaf table $L^{(p)}$.

Then,  based on the updated hash table $H^{(p)}$, we   evolve the numerical solution by the DG formulation with space $\bV_{N,H^{(p)}}^k$. Namely, we solve for  $\bV_{N,H^{(p)}}^k$ from $t^n$ to $t^{n+1}$, to generate the precoarsened solution $\tilde{u}_h^{n+1}$, by using the the accurate solver with artificial viscosity in Section \ref{sec:viscosity}. Here, {  the interpolation operator should be determined} by the updated hash table $H^{(p)}$. Note that in the artificial viscosity $\nu=\nu(u_h)$ we fix $u_h$ to be $u_h^n$ such that the matrix for {  the diffusion term only needs to be resembled once} in each time step.

The last step   is to coarsen  by removing elements that become insignificant at time level $t^{n+1}.$  The hash table $H^{(p)}$ that stores the numerical solution $\tilde{u}_h^{n+1}$  is recursively coarsened by the following procedure. 
The leaf table $L^{(p)}$ is traversed, and if an element $V_\bl^\bj\in L^{(p)}$ satisfies the coarsening criterion
\begin{equation}\label{eq:l2_c}
    (\sum_{\mathbf{1}\leq\bi\leq\bk+\mathbf{1}}|u^\bj_{\bi,\bl}|^2)^{\frac12}<\eta,
\end{equation}
where $\eta$ is a prescribed error constant, then we remove the element from both table $L^{(p)}$ and $H^{(p)}$, and set the associated coefficients $u^{\bj'}_{\bi,\bl'}=0,\,\mathbf{1}\leq\bi\leq\bk+\mathbf{1}$. For each of its parent elements in table $H^{(p)}$, we decrease the number of children by one. If the number becomes zero, i.e, the element has no child any more, then it is added to the leaf table $L^{(p)}$ accordingly. Repeat the coarsening procedure until no element can be removed from the table $L^{(p)}$. By removing only the leaf element at each time, we avoid generating ``holes" in the hash table. The output of this coarsening procedure are the updated hash table and leaf table, denoted by $H$ and $L$ respectively, and the compressed numerical solution $u_h^{n+1} \in \bV_{N,H}^k$. In practice, $\eta$ is chosen to be smaller than $\varepsilon$ for safety. In the simulations presented in this paper, we use $\eta = \varepsilon/10$.

	{  For smooth solutions, the adaptive grids automatically reduce to the sparse grid methods as shown in \cite{guo2017adaptive}. We also comment that the adaptive time evolution procedure is stable if the high order RK-DG procedure is stable. This is because
(1) in the prediction step, no change to numerical solution is made; (2) in the refinement step, we only add ``zero" to the solution at $t^n$, so it does not add energy; (3) in the evolution step, we use a stable RK-DG methods; (4) in the coarsening step, we remove the coefficients when measured in orthogonal multiwavelet bases, so the $L^2$ energy is guaranteed to decay.   }

{ 

\subsection{Fast computations of the residual}\label{subsec:fast}
We  now describe the details of the computation of the right hand side of DG weak formulation \eqref{eq:DG-semi-interp}. 
This is important because the multiwavelet bases are global, and the evaluation of the residual yields denser matrix than those obtained by standard local bases. Our work extends the  fast matrix-vector multiplication in \cite{shen2010efficient,zeiser2011fast}  to adaptive index set.
Consider   matrix-vector multiplication in multidimensions in the following form:
\begin{equation}\label{eq:LU-original}
	f_{\bm{n}} = \sum_{H(\bm{n}')\le 0} f'_{\bm{n}'} t_{n_1', n_1}^{(1)} t_{n_2',n_2}^{(2)}\cdots t_{n_d',n_d}^{(d)}, \quad H(\bm{n})\le 0,
\end{equation}
where $\bm{n}=({n}_1,{n}_2,\dots,{n}_d)$ and $\bm{n}'=({n}_1',{n}_2',\dots,{n}_d')$ can be thought of as  the level of the mesh, and $t_{n_1', n_1}^{(i)}=T^{(i)}_{n_1', n_1}$ represents the calculations in the $i$-th dimension. It is assumed that the constraint function $H=H(\bm{n}')=H({n}_1',{n}_2',\dots,{n}_d')$ is non-decreasing with respect to each variable. This holds true for sparse grid (by taking $H(\bm{n}')=|\bm{n}'|_1$) and also for adaptive grid considered in this work.

One can compute the sum \eqref{eq:LU-original} dimension-by-dimension, i.e. first perform the transformation in the $x_1$ dimension:
\begin{equation}\label{eq:adapt-x1}
    g^{(1)}_{(n_1,n_2',\dots,n_d')} = \sum_{H(n_1',n_2',\dots,n_d')\le 0} f'_{(n_1',n_2',\dots,n_d')} t_{n_1', n_1}^{(1)},
\end{equation}
and then in the $x_2$ dimension:
\begin{equation}\label{eq:adapt-x2}
    g^{(2)}_{(n_1,n_2,n_3'\dots,n_d')} = \sum_{H(n_1,n_2',\dots,n_d')\le 0} g^{(1)}_{(n_1,n_2',\dots,n_d')} t_{n_2', n_2}^{(2)},
\end{equation}
and all the way up to $x_d$ dimension:
\begin{equation}\label{eq:adapt-xd}
    f_{(n_1,n_2,n_3\dots,n_d)} = \sum_{H(n_1,n_2,\dots,n_{d-1},n_d')\le 0} g^{(d-1)}_{(n_1,n_2,\dots,n_{d-1},n_d')} t_{n_d', n_d}^{(d)},
\end{equation}
It can be proved that the   \eqref{eq:adapt-x1}-\eqref{eq:adapt-xd} is equivalent to the original summation \eqref{eq:LU-original}, if assuming that, for some integer $1\le k\le d$, $T^{(i)}$ for $i=1,\dots,k-1$ are strictly block lower triangular and $T^{(i)}$ for $i=k+1,\dots,d$ are block upper triangular (or $T^{(i)}$ for $i=1,\dots,k-1$ are block lower triangular and $T^{(i)}$ for $i=k+1,\dots,d$ are strictly block upper triangular) \cite{shen2010efficient}.

When such properties for $T$ matrices are not true, one can perform  $L+U$ split and   \eqref{eq:LU-original} becomes:
\begin{equation}
    f_{\bm{n}} = \sum_{H(\bm{n}')\le 0} f'_{\bm{n}'} (l_{n_1', n_1}^{(1)}+u_{n_1', n_1}^{(1)})(l_{n_1', n_1}^{(1)}+u_{n_1', n_1}^{(1)})\cdots (l_{n_{d-1}', n_{d-1}}^{(d-1)}+u_{n_{d-1}', n_{d-1}}^{(d-1)})t_{n_d',n_d}^{(d)},
\end{equation}
where there are totally $2^{d-1}$ terms. For each term, we can perform the dimension-by-dimension matrix-vector product. 
The overall computational cost   is $\mathcal{O}(2^{d-1} \cdot DoF \cdot N)$   if the cost of one-dimensional transform is log-linear, i.e., $\mathcal{O}(\mathcal{N}\log \mathcal{N})$  where $\mathcal{N}$ denotes the DoF in one-dimension \cite{shen2010efficient}. This assumption holds true for our scheme.

Now, we return to the description of the implementation of  \eqref{eq:DG-semi-interp}. The computations are done using the following steps with repeated application of the fast matrix-vector product described above. 
We denote the adaptive numerical solution   by
\begin{equation}\label{eq:alpert-basis}
	u_h(\bx) = \sum_{{(\bl,\bj)\in G,  \mathbf{1}\le \bi\le \bk+\mathbf{1}}} c^\bj_{\bi,\bl} v^\bj_{\bi,\bl}(\bx),
\end{equation}
where $v^\bj_{\bi,\bl}(\bx)$ is the Alperts' multiwavelets in multidimensions defined in \eqref{eq:multidim-basis} and the set $G$ collects the index of all active elements. The active index set of the interpolatory multiwavelets is also equal to G. In particular, the adaptive interpolation function space is denoted by 
\begin{equation}\label{eq:interp-basis}
	\bV^M = \{ \psi_{\bi,\bl,\bn}^{\bj}: (\bl,\bj)\in G, \mathbf{1}\le \bi\le \bk+\mathbf{1}, \mathbf{1}\le \bn\le \bP+\mathbf{1} \}.
\end{equation}
The corresponding interpolation points are
\begin{equation}\label{eq:interp-pt}
	Q = \{ \tilde{\bx}_{\bi,\bl}^{\bj}=(\tilde{x}_{i_1,l_1}^{j_1}, \tilde{x}_{i_2,l_2}^{j_2}, \cdots, \tilde{x}_{i_d,l_d}^{j_d}):\mathbf{1}\le \bi\le \bk+\mathbf{1}, (\bl,\bj)\in G \}.
\end{equation}

The first step is to obtain  the function and its derivative value at the interpolation points \eqref{eq:interp-pt} from the coefficients $ c^\bj_{\bi,\bl}.$ We denote
\begin{equation}\label{eq:interp-up}
\begin{aligned}
	(u_p)_{\bi,\bl,\bn}^{\bj} := \frac{\partial u_h}{\partial \bx^{\bn}}(\tilde{\bx}_{\bi,\bl}^{\bj}) &= \sum_{{(\bl,\bj)\in G,  \mathbf{1}\le \bi\le \bk+\mathbf{1}}} c^\bj_{\bi,\bl} \frac{\partial v^\bj_{\bi,\bl}}{\partial \bx^{\bn}}(\tilde{\bx}_{\bi,\bl}^{\bj}) 	 \\
	&= \sum_{{(\bl,\bj)\in G,  \mathbf{1}\le \bi\le \bk+\mathbf{1}}} c^\bj_{\bi,\bl} \prod_{m=1}^d \frac{dv^{j_m}_{i_m,l_m}}{dx^{n_m}}(\tilde{x}_{i_m,l_m}^{j_m}), 
\end{aligned}
\end{equation}
Here, the values of Alperts' basis functions in 1D and their derivatives at all the interpolation points in 1D should be computed and stored before the time evolution. The fast matrix-vector multiplication described above is applied to evaluate this summation \eqref{eq:interp-up}.

The second step is to calculate the value of $f(u_h)$ and its derivative at all the interpolation points \eqref{eq:interp-pt}, which is denoted by $\{ (f_p)_{\bi,\bl,\bn}^{\bj} \}$, by using $\{ (u_p)_{\bi,\bl,\bn}^{\bj} \}$:
\begin{equation*}
	(f_p)_{\bi,\bl,\bn}^{\bj} := \frac{\partial f(u_h)}{\partial \bx^{\bn}}(\tilde{\bx}_{\bi,\bl}^{\bj}).
\end{equation*}
This can be obtained by simply using the chain rule.
Afterwards, we can  transform the point values $\{ (f_p)_{\bi,\bl,\bn}^{\bj} \}$ to the coefficient of the interpolation basis $ \tilde{c}^\bj_{\bi,\bl,\bn}$  in 
\begin{equation}\label{eq:interp-fu}
	\mathcal{I}[f(u_h)] = \sum_{\substack{(\bl,\bj)\in G, \\ \mathbf{1}\le \bi\le \bk+\mathbf{1}, \\ \mathbf{1}\le \bn\le \bP+\mathbf{1} }} \tilde{c}^\bj_{\bi,\bl,\bn} \psi^\bj_{\bi,\bl,\bn}(\bx)
\end{equation}
by applying the fast algorithm in \cite{tao2019collocation}.

Now, the terms in \eqref{eq:DG-semi-interp} can be readily computed. For example, for the volume integral $\sum_{K\in T_h}\int_{K}\mathcal{I}[f(u_h)]\cdot\nabla v_h d\bx,$ 
let the test function be $v_h = v^{\bj'}_{\bi',\bl'}(\bx),$ we have 
\begin{equation}
	\begin{aligned}
		\sum_{K\in T_h}\int_{K}\mathcal{I}[f(u_h)]\cdot\nabla v_h d\bx 
		&= \int_{\Omega} \mathcal{I}[f(u_h)]\cdot\nabla v_h d\bx   \\
		&= \int_{\Omega} \sum_{\substack{(\bl,\bj)\in G, \\ \mathbf{1}\le \bi\le \bk+\mathbf{1}, \\ \mathbf{1}\le \bn\le \bP+\mathbf{1} }} \tilde{c}^\bj_{\bi,\bl,\bn} \psi^\bj_{\bi,\bl,\bn}\cdot\nabla v^{\bj'}_{\bi',\bl'} d\bx   \\
		&= \sum_{\substack{(\bl,\bj)\in G, \\ \mathbf{1}\le \bi\le \bk+\mathbf{1}, \\ \mathbf{1}\le \bn\le \bP+\mathbf{1} }} \tilde{c}^\bj_{\bi,\bl,\bn} \int_{\Omega} \psi^\bj_{\bi,\bl,\bn}\cdot\nabla v^{\bj'}_{\bi',\bl'} d\bx   \\
		&= \sum_{\substack{(\bl,\bj)\in G, \\ \mathbf{1}\le \bi\le \bk+\mathbf{1}, \\ \mathbf{1}\le \bn\le \bP+\mathbf{1} }} \tilde{c}^\bj_{\bi,\bl,\bn} \prod_{m=1}^d \int_0^1 \psi^{j_m}_{{i_m},{l_m},{n_m}} \frac{d}{d x_m}{v^{j'_m}_{i'_m,l'_m}} dx
	\end{aligned}
\end{equation}
This can be again treated by the fast matrix-vector multiplication algorithm. The 1D values $\int_0^1 \psi^{j_m}_{{i_m},{l_m},{n_m}} \frac{d}{dx}{v^{j'_m}_{i'_m,l'_m}} dx$  should be precomputed and stored before the time evolution starts.
The computation of the edge integrals   $ \sum_{e\in\Gamma_h}\int_{e}\mathcal{I}[\widehat{f\cdot n_K}(u_h)]v_h ds $ also follows the same approach as that over the volume. The details are omitted here for brevity.
}

\subsection{Artificial viscosity} 
\label{sec:viscosity}

{  For capturing shocks}, we add artificial viscosity following the approach in \cite{bassi1995viscosity} and arrive at the semi-discrete formulation
\begin{align}\label{eq:semi-DG-viscosity}
    \sum_{K\in T_h}\int_{K} (u_h)_t v_h d\bx - \sum_{K\in T_h}\int_{K}\mathcal{I}[f(u_h)]\cdot\nabla v_h d\bx + \sum_{e\in \Gamma_h}\int_{e}\mathcal{I}[\widehat{f\cdot n_K}(u_h)]v_h ds \notag\\
     - \sum_{K\in T_h}\int_K \nu(u_h)\nabla u_h\cdot\nabla v_h d\bx = 0.
\end{align}
where $\nu=\nu(u_h)\ge0$ is the artificial viscosity. The artificial viscosity is piecewise constant in each element and depends on $u_h$. {  Since the sharp gradient and shock will only appear in the leaf element \cite{hovhannisyan2014scalar},} the artificial viscosity is only imposed in the leaf element and determined in the following approach:
$$ \nu=\left\{
\begin{aligned}
& 0, \quad  &\text{if}\quad s_e \le s_0 + \kappa, \\
& \nu_0 h, \quad &\text{otherwise}.
\end{aligned}
\right.
$$
where $\nu_0>0$ and $\kappa$ are constants chosen empirically, see \cite{Discacciati2018thesis, hesthaven2019viscosity} for discussions on standard DG methods. {  In this paper, we use $\nu_0=2$ and $\kappa=0$.}  $s_e$ and $s_0$ are defined as
\begin{equation}
    s_e = \log_{10}(\sum_{\mathbf{1}\leq\bi\leq\bk+\mathbf{1}}|u^\bj_{\bi,\bl}|^2)^{\frac12}, \quad s_0 = \log_{10}(2^{-(k+\frac12)|\bl|_1}).
\end{equation}
In the regions where the solutions are smooth, $s_e$ should be the same order as $s_0$ by the estimate \eqref{eq:smooth-coeff-l2-norm}. If the solution is discontinuous, $s_e$ should be much larger than $s_0$.  

\begin{rem}
    There are still many problems to be explored on the artificial viscosity. The first one is the specific form of the artificial viscosity term. Here, for simplicity, we only add an artificial viscosity term $\int \nu(u_h) \nabla u_h\cdot \nabla v_h$ in \eqref{eq:semi-DG-viscosity}. One may also add a physical diffusion term $\nabla\cdot(\nabla \nu(u)u)$ and {  then discretize it} using local DG \cite{persson2006shock} or interior penalty DG \cite{klockner2011viscous}. The second issue is how to choose the optimal parameters $\kappa$ and $\nu_0$ in the artificial viscosity to obtain a sharp shock profile. The artificial neural network introduced in \cite{ray2018troublecell,hesthaven2019viscosity} might be helpful for this problem. We will explore these subjects in future work.
\end{rem}

The diffusion coefficient $\nu(u_h)$ is of order $\mathcal{O}(h)$ for trouble cells and zero for normal cells. Thus, the explicit time integration in both convection and diffusion terms in \eqref{eq:semi-DG-viscosity} will yield  CFL condition $\Delta t=\mathcal{O}(h)$. For hyperbolic problems $u_t=u_x$ with DG discretizations using polynomials of degree $k$ and upwind numerical flux and a $(k+1)$ stage explicit RK method of order $(k+1)$, the CFL constant is around $\frac{1}{2k+1}$ \cite{cockburn2001review}. However, for solving diffusion equation $u_t=u_{xx}$ with local DG discretization with polynomials and alternating numerical flux, the CFL constant is around 0.0555 for $k=1$, 0.0169 for $k=2$, 0.0063 for $k=3$, and 0.003 for $k=4$, if coupled with explicit Runge-Kutta methods of the corresponding order\footnote{The CFL constants are provided by Chi-Wang Shu from Brown University in personal communications.}, which is much smaller   than the CFL constant for convection terms, especially for polynomials of high degrees. If the alternating numerical flux is replaced by the central flux for the diffusion equation, the CFL constant is slightly larger but still much smaller than the CFL constant for the convection part: 0.125 for $k=1$, 0.0384 for $k=2$, 0.0158 for $k=3$ and 0.0083 for $k=4$. 

To obtain better computational efficiency, we avoid   explicit time integrations and apply the IMEX time discretizations where the convection term is treated explicitly and the diffusion term implicitly. Here, we only present the third-order IMEX method introduced in \cite{pareschi2005imex}, which will be coupled with the DG space of quadratic polynomials. The explicit part is the same with the explicit third-order strong stability preserving (SSP) Runge-Kutta method \cite{shu1988jcp} and the implicit part has four stages. To be precise, for the ODE systems:
\begin{equation}\label{eq:IMEX-ODE}
    \frac{dU}{dt} = F(U) + G(U),
\end{equation}
where $F(U)$ denotes the non-stiff term (convection parts) and $G(U)$ the stiff term (diffusion parts). The IMEX scheme for \eqref{eq:IMEX-ODE} reads as
\begin{subequations}\label{eq:IMEX-scheme}
\begin{align}
	U^{(i)} &= U^n + \Delta t\sum_{j=1}^{i-1} \tilde{a}_{ij} F(U^{(j)}) + \Delta t \sum_{j=1}^{i} {a_{ij}} G(u^{(j)}), \quad i=1,\dots, \nu, \\
	U^{n+1} &= U^n + \Delta t\sum_{i=1}^{\nu} \tilde{w}_{i} F(U^{(i)}) + \Delta t \sum_{i=1}^{\nu} {w_i} G(u^{(i)}),
\end{align}
\end{subequations}
with the stage $\nu=4$ and the parameters
\begin{align*}
    & \tilde{a}_{32} = 1, \, \tilde{a}_{42}=\tilde{a}_{43}=\frac{1}{4}, \, a_{11}=a_{22}=a_{33}=a_{44}=\alpha, \, a_{21}=-\alpha, \, a_{32}=1-\alpha, \\
    & a_{41}=\beta, \, a_{42}=\eta, \, a_{43}=\frac{1}{2}-\beta-\eta-\alpha, \, \tilde{w}_2=\tilde{w}_3={w}_2={w}_3=\frac{1}{6}, \, \tilde{w}_4={w}_4=\frac{2}{3},
\end{align*}
and
\begin{equation*}
    \alpha=0.24169426078821, \quad \beta = 0.06042356519705, \quad \eta = 0.12915286960590.
\end{equation*}
The other parameters not listed above are zero.

By using the IMEX time integrator, the time step restriction remains the same as determined by the convection term. Note that the artificial viscosity $\nu=\nu(u)$ is determined by $u_h^n$ and will keep unchanged in the middle stages of time evolution from $t^n$ to $t^{n+1}$. Therefore, the matrix for {  the diffusion term only needs to be assembled once} in each time step. Also, we only need to solve a linear system in which the coefficient matrix is symmetric positive definite and also sparse (there exist only a small portion of elements with non-zero viscosity). In the computation, we apply the conjugate gradient method to solve this linear system. We also remark that, for smooth solutions, this scheme will reduce to the explicit time integrations when coupled with the semi-discrete DG scheme with artificial viscosity \eqref{eq:semi-DG-viscosity}, since the artificial viscosity will automatically vanish and then IMEX scheme \eqref{eq:IMEX-scheme} reduces to the third-order SSP RK method. {  For each element $V_l^j$ that satisfies the criteria, we compute the matrix corresponding to the term $\nu_0 h\int_{V_l^j} u_x v_x$ by using the undirectional approach. Then, by making a summation over all the elements that have artificial viscosity, we obtain the corresponding matrix term.}

\section{Numerical results}

In this section, we perform numerical experiments to validate the accuracy and robustness of our scheme. The computational domain is $[0,1]$ for 1D and $[0,1]^2$ for 2D. Periodic boundary condition is imposed. When testing accuracy for smooth solutions, we apply the TVD Runge-Kutta time discretizations \cite{shu1988jcp}: second-order RK method for the piecewise linear finite element space ($k=1$) and third-order RK method for the quadratic ($k=2$) and cubic ($k=3$) finite element space. When testing the capability for capturing discontinuous solutions, we use the quadratic finite element space ($k=2$) coupled with the third-order IMEX time discretizations \eqref{eq:IMEX-scheme}. {  The DoF refers to the number of Alperts' multiwavelets basis functions in the adaptive grids, i.e. $\textrm{dim}(\bV^k)$.}

\begin{examp}[1D linear advection with constant coefficient]\label{exam:linear-1d}
In this example, we consider the 1D linear advection equation with constant coefficient:
    \begin{equation}
        u_t + u_x  = 0,
    \end{equation}
    with the initial value $u(x,0) = u_0(x)$.   Since the equation is linear, the interpolation operator is not needed. We focus on a non-smooth initial condition to test the effectiveness of adaptivity and the artificial viscosity. The initial condition is chosen as:
    \begin{equation}
         u_0(x)=\left\{
            \begin{aligned}
            & 1, \quad 0.23 < x < 0.56, \\
            & 0, \quad \textrm{otherwise}.
            \end{aligned}
            \right.
     \end{equation}
    We set $N=8$ and $\epsilon=10^{-5}$. The solutions with and without artificial viscosity at $t=3$ are presented in Fig. \ref{fig:linear1d-solution-profile}. We observe that the multiresolution DG method without artificial viscosity as in \cite{guo2017adaptive} can also produce well-resolved solution. With the artificial viscosity, the oscillations are suppressed.
    \begin{figure}
        \centering
        \subfigure[without artificial viscosity]{
        \begin{minipage}[b]{0.46\textwidth}
        \includegraphics[width=1\textwidth]{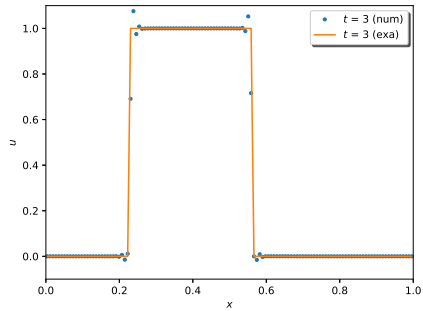}    
        \end{minipage}
        }
        \subfigure[with artificial viscosity]{
        \begin{minipage}[b]{0.46\textwidth}    
        \includegraphics[width=1\textwidth]{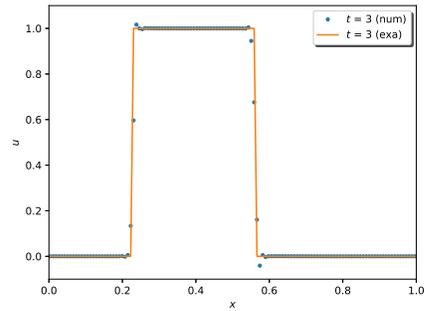}
        \end{minipage}
        }
        \caption{Example \ref{exam:linear-1d}: 1D linear advection with constant coefficient at $t=3$. Left: without artificial viscosity; right: with artificial viscosity. The solid lines are the exact solution and the symbols are the numerical solutions.}
        \label{fig:linear1d-solution-profile}
    \end{figure}

    In Fig. \ref{fig:linear1d-dof}, the degrees of freedom and the errors for scheme with and without artificial viscosity are presented. Since the artificial viscosity (diffusion term) can smoothen the solution, the method has fewer degrees of freedom and thus less computational cost. {  It is also observed that the error with artificial viscosity are comparable to that without artificial viscosity. Note that for this example, the full grid method offers excellent accuracy in the smooth region because the solution is piecewise constant. This is in general not true, see for example Fig. \ref{fig:burgers2d-full}.}
    \begin{figure}
        \centering
        \subfigure[time history of degrees of freedom]{
        \begin{minipage}[b]{0.46\textwidth}
        \includegraphics[width=1\textwidth]{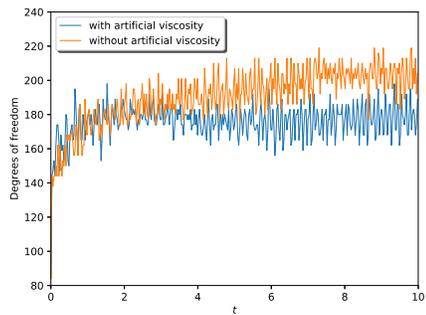}    
        \end{minipage}
        }
        \subfigure[{  error between exact solution and numerical solution at $t=3$}]{
        \begin{minipage}[b]{0.46\textwidth}    
        \includegraphics[width=1\textwidth]{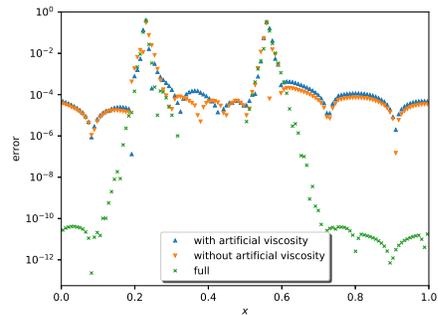}
        \end{minipage}
        }
        \caption{Example \ref{exam:linear-1d}: 1D linear advection with constant coefficient at $t=3$. Left: Time histories of the number of active degrees of freedom with and without artificial viscosity. Right: error between exact solution and numerical solution at $t=3$ with and without artificial viscosity.}
        \label{fig:linear1d-dof}
    \end{figure}
\end{examp}

\begin{examp}[1D Burgers' equation]\label{exam:burgers-1d} In this example, we focus on the 1D Burgers' equation.
    \begin{equation*}
        u_t + \brac{\frac{u^2}{2}}_x = 0,
    \end{equation*}
    with the initial value $u(x,0) = u_0(x)=\sin(2\pi x)+\frac{1}{2}$. 
The shock begins to develop at $t=\frac{1}{2\pi} \approx 0.159$. For this example, we only focus on the non-smooth solution, and defer the accuracy study for smooth solution to the next example in 2D. The adaptive multiresolution DG scheme without artificial viscosity will blow up when the shock develops. We run the code up to time $t=0.2,$ with maximum mesh level is $N=8$. The solutions obtained with artificial viscosity  are shown in Fig. \ref{fig:burgers1d-shock-eps1e3} with $\epsilon=10^{-3}$ and Fig. \ref{fig:burgers1d-shock-eps1e4} with $\epsilon=10^{-4}$. Our scheme can capture the shock very well. We also observe that the leaf elements concentrate near the shock. The artificial viscosity is only imposed on elements near the shock.

    \begin{figure}
        \centering
        \subfigure[solution profile at $t=0.1578$ ]{
        \begin{minipage}[b]{0.46\textwidth}
        \includegraphics[width=1\textwidth]{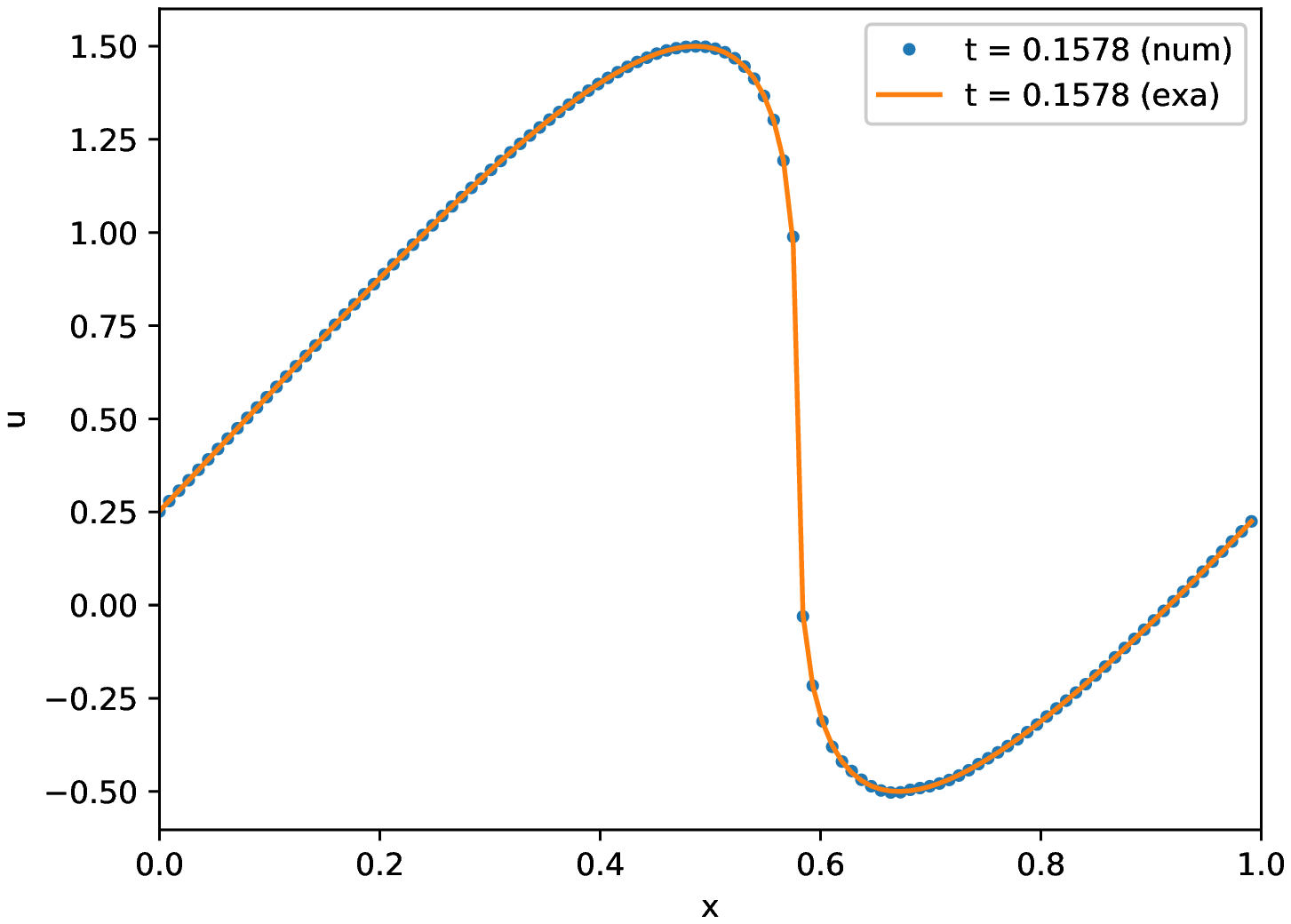}
        \end{minipage}
        }
        \subfigure[supports of active elements and elements with artificial viscosity $t=0.1578$]{
        \begin{minipage}[b]{0.46\textwidth}    
        \includegraphics[width=1\textwidth]{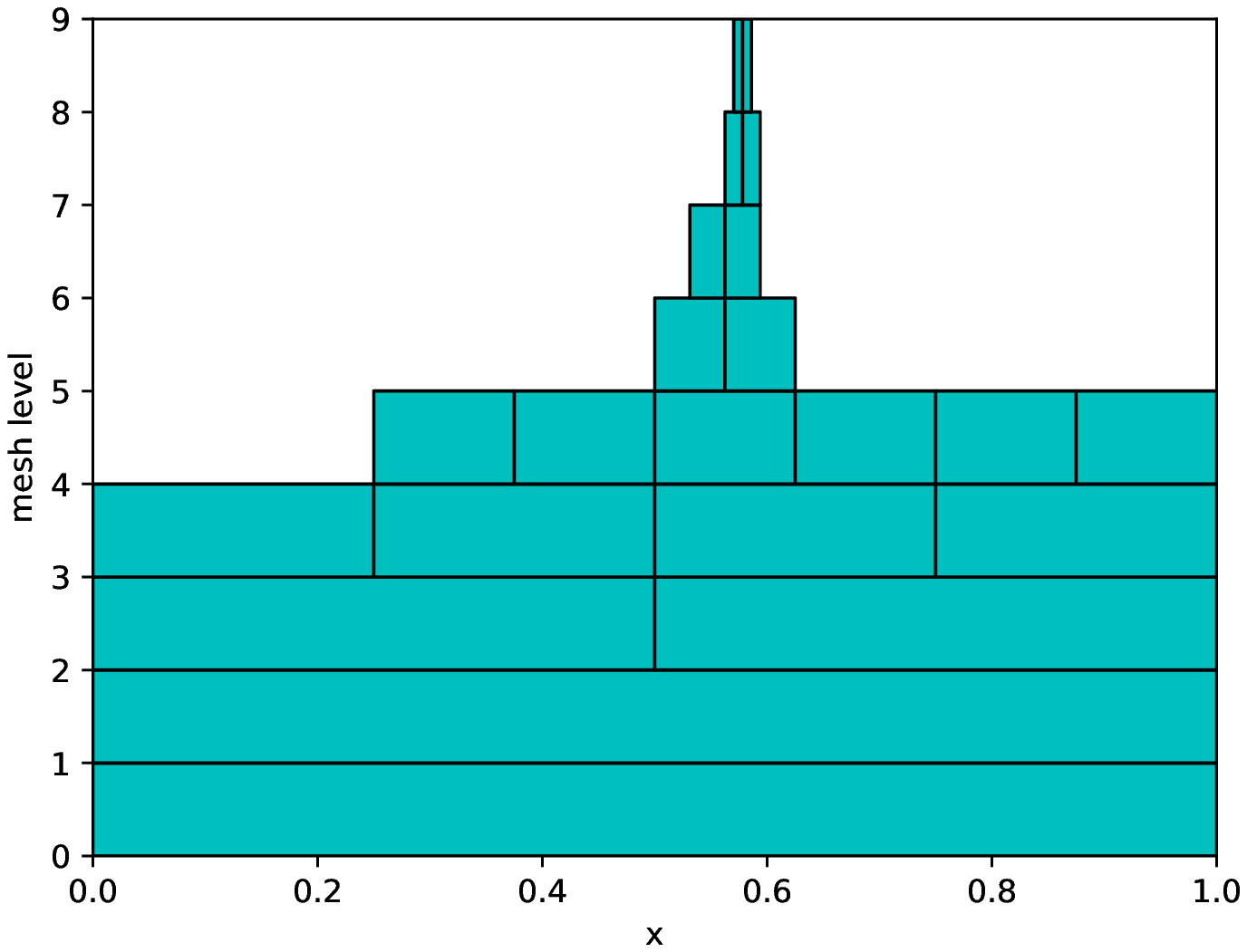}
        \end{minipage}
        }
        \bigskip
        \subfigure[solution profile at $t=0.2$]{
        \begin{minipage}[b]{0.46\textwidth}
        \includegraphics[width=1\textwidth]{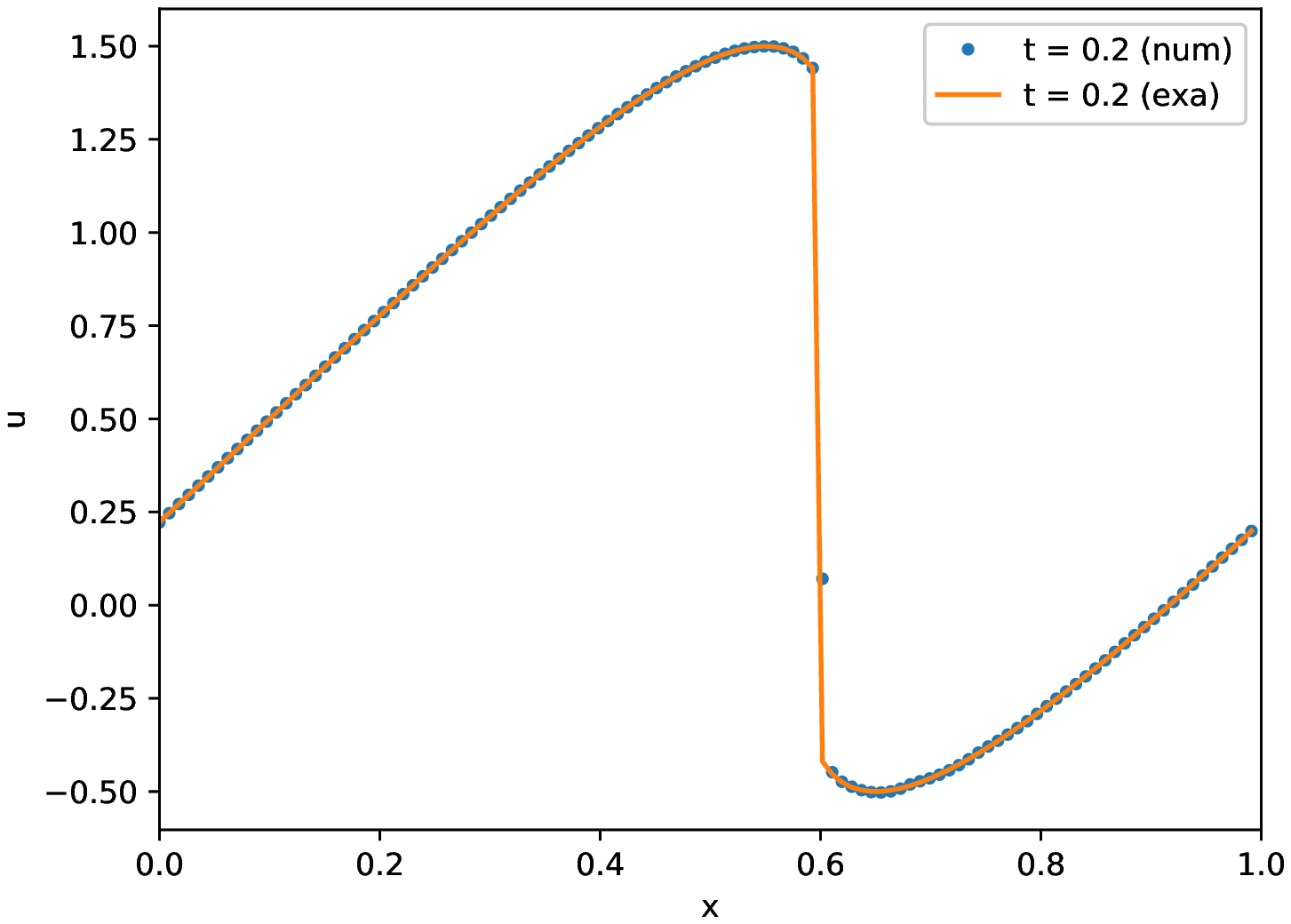}
        \end{minipage}
        }
        \subfigure[supports of active elements and elements with artificial viscosity at $t=0.2$]{
        \begin{minipage}[b]{0.46\textwidth}    
        \includegraphics[width=1\textwidth]{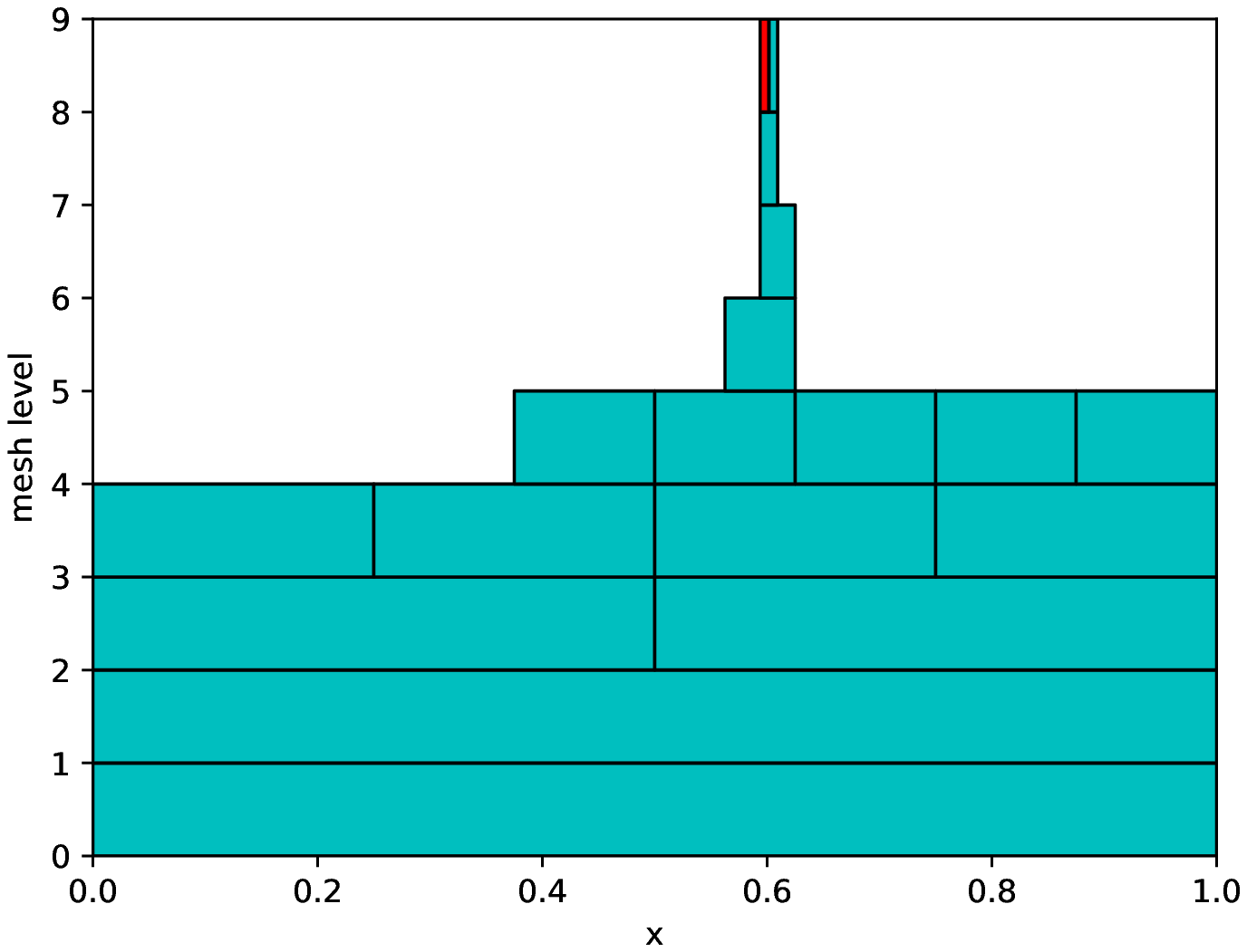}
        \end{minipage}
        }        
        \caption{Example \ref{exam:burgers-1d}: 1D Burgers' equation at $t=0.1578$, 0.1875 and 0.2. $N=8$ and $\epsilon=10^{-3}$. Left: solution profile; right: blue color denotes supports of active elements, and red color denotes elements with non-zero artificial viscosity.}
        \label{fig:burgers1d-shock-eps1e3}
    \end{figure}

    \begin{figure}
        \centering
        \subfigure[solution profile at $t=0.1578$ ]{
        \begin{minipage}[b]{0.46\textwidth}
        \includegraphics[width=1\textwidth]{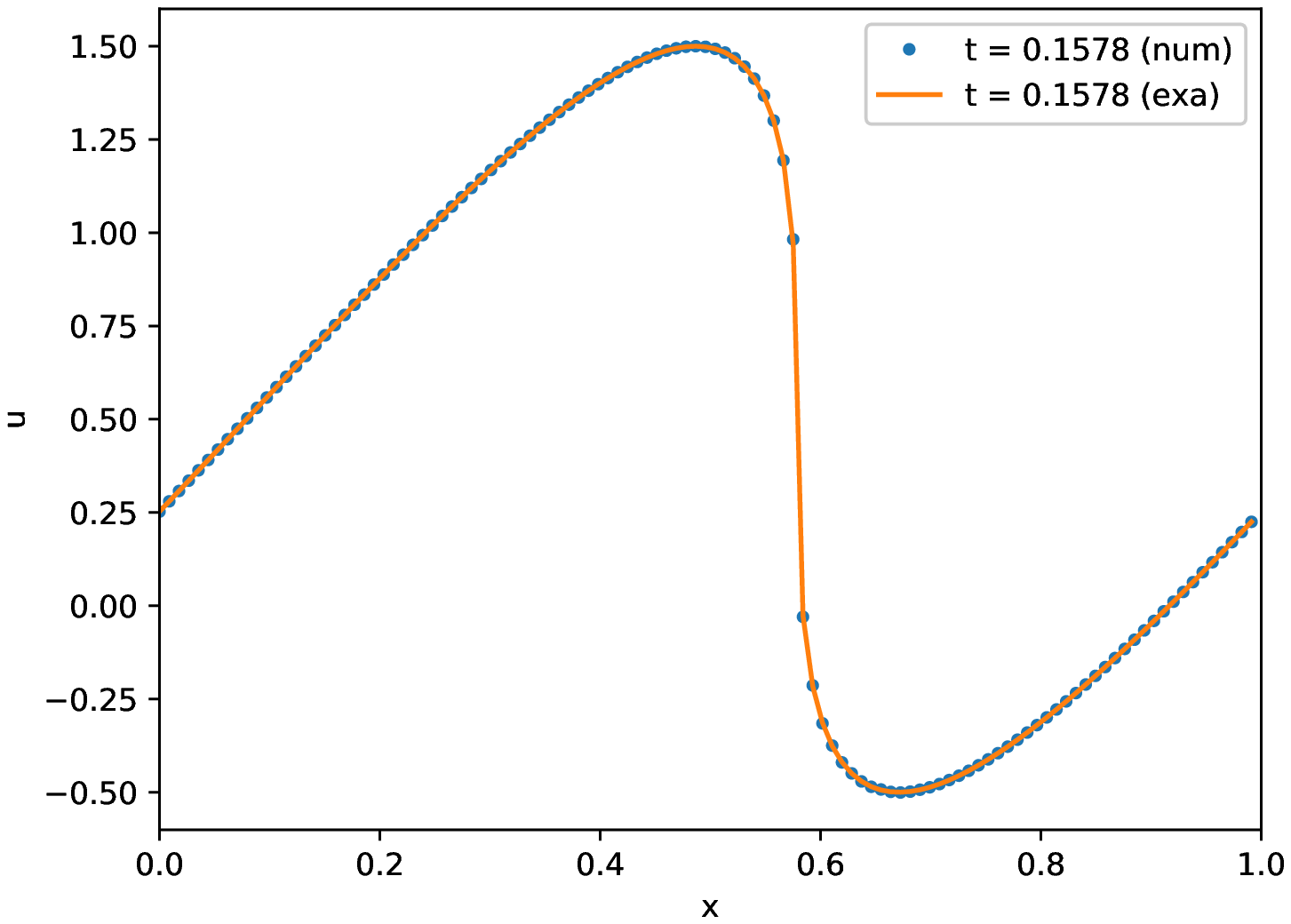}
        \end{minipage}
        }
        \subfigure[supports of active elements and elements with artificial viscosity $t=0.1578$]{
        \begin{minipage}[b]{0.46\textwidth}    
        \includegraphics[width=1\textwidth]{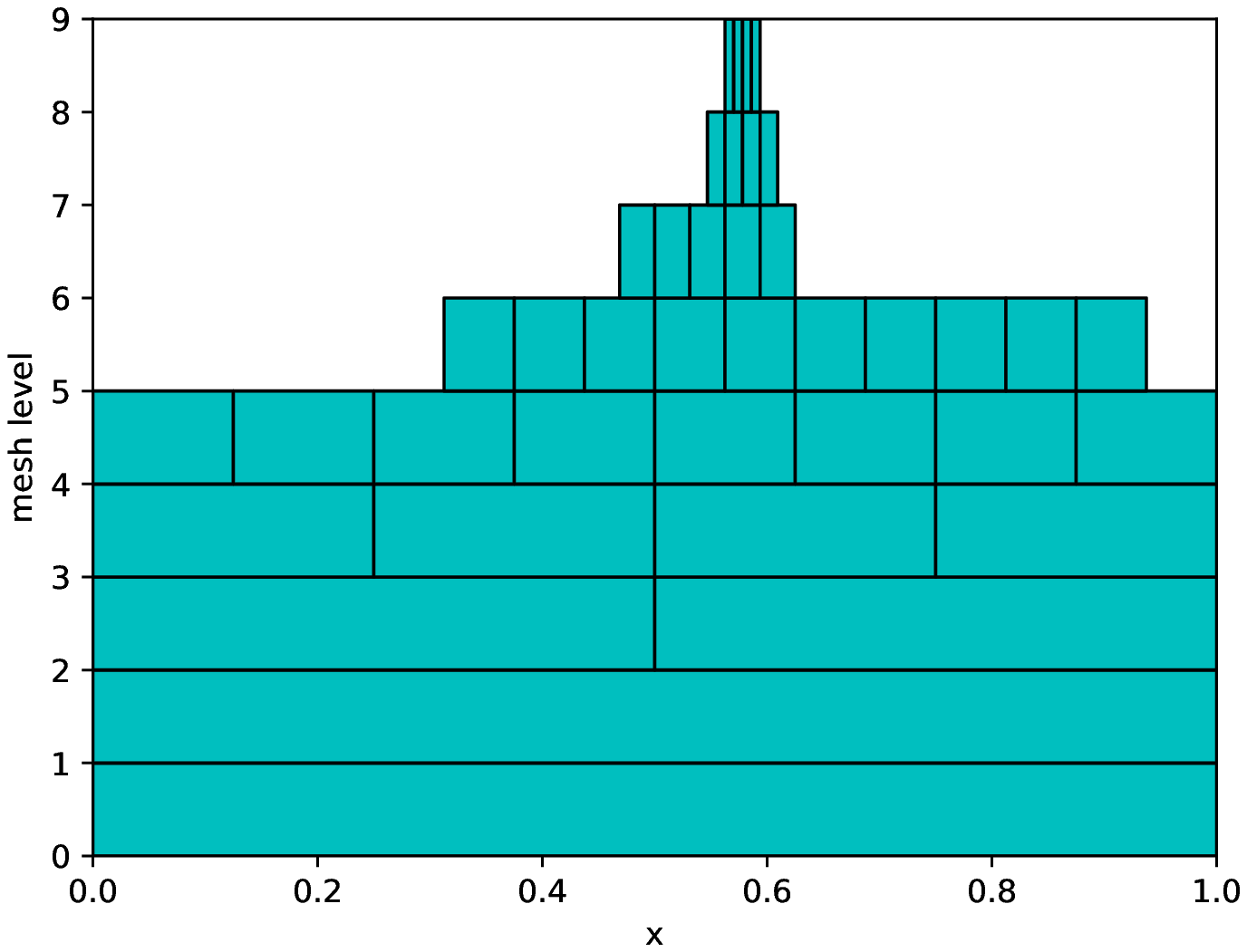}
        \end{minipage}
        }
        \bigskip
        \subfigure[solution profile at $t=0.2$]{
        \begin{minipage}[b]{0.46\textwidth}
        \includegraphics[width=1\textwidth]{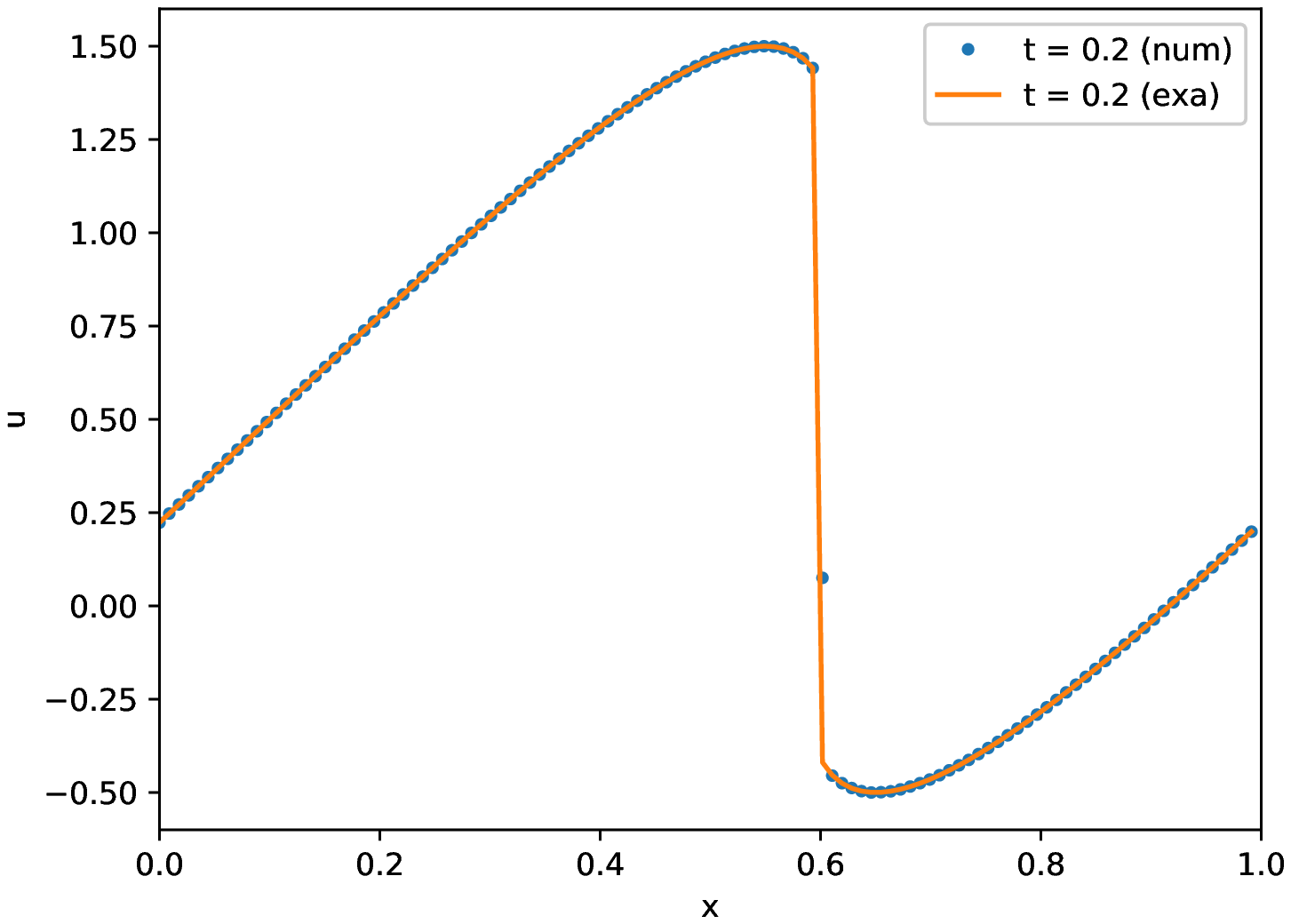}
        \end{minipage}
        }
        \subfigure[supports of active elements and elements with artificial viscosity at $t=0.2$]{
        \begin{minipage}[b]{0.46\textwidth}    
        \includegraphics[width=1\textwidth]{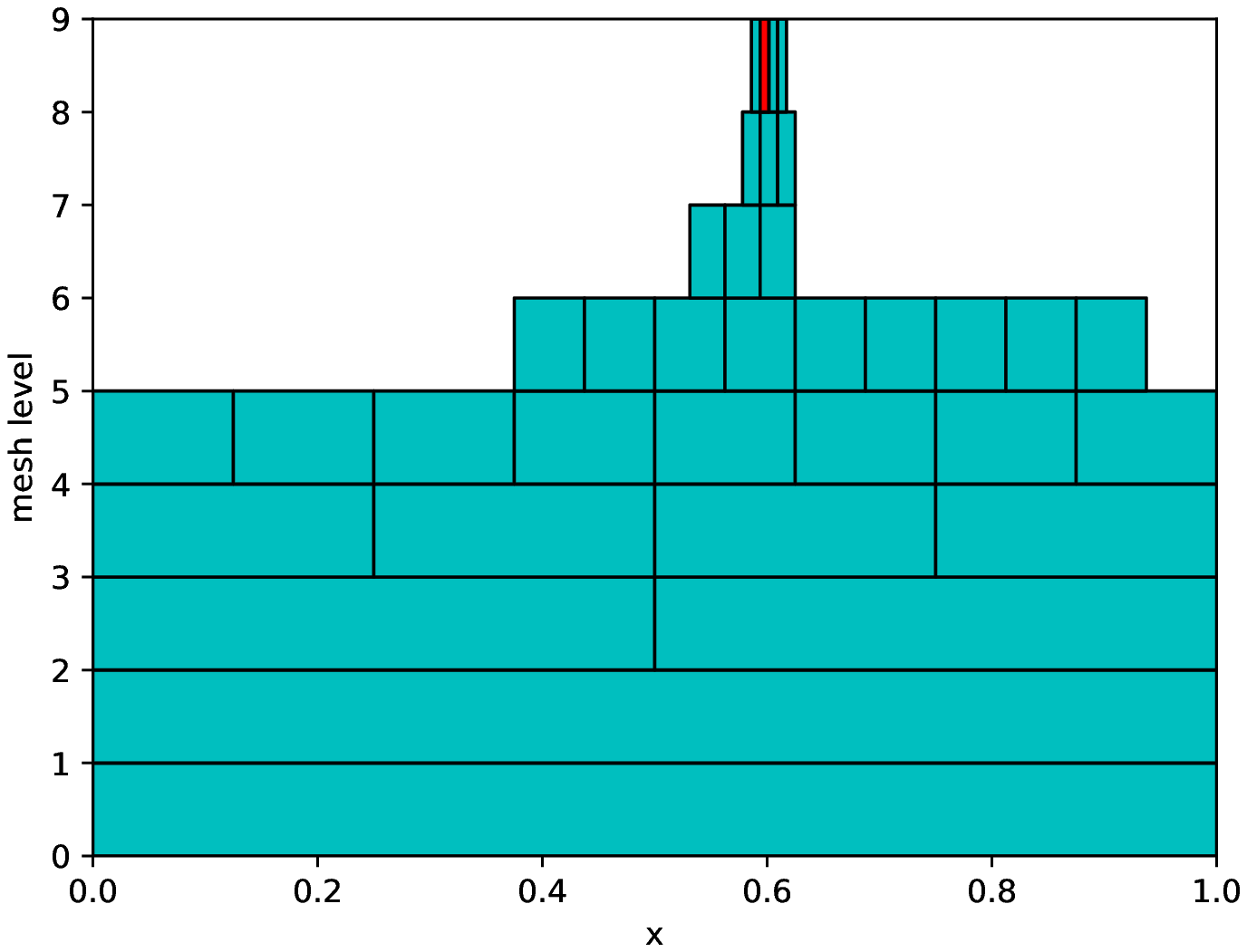}
        \end{minipage}
        }        
        \caption{Example \ref{exam:burgers-1d}: 1D Burgers' equation at $t=0.1578$, 0.1875 and 0.2. $N=8$ and $\epsilon=10^{-4}$. Left: solution profile; right: blue color denotes supports of active elements, and red color denotes elements with non-zero artificial viscosity.}
        \label{fig:burgers1d-shock-eps1e4}
    \end{figure}

\end{examp}

    



\begin{examp}[2D Burgers' equation]\label{exam:burgers-2d}
In this example, we consider the 2D Burgers' equation:
\begin{equation*}
    u_t + \brac{\frac{u^2}{2}}_x + \brac{\frac{u^2}{2}}_y = 0,
\end{equation*}
with the initial value $u=u_0(x,y)=\sin(2\pi(x+y))$.

To study the effect of the interpolation operator, we first test the convergence rates  for smooth solutions without adaptivity and artificial viscosity. The code is run up to $T=0.01$. Table \ref{ex:table-2D-Burgers-Full-Lagr-inner} shows the convergence rate of the standard DG with integrals over elements and edges calculated by Lagrange interpolation techniques. The interpolation points are imposed in the inner domain, see the interpolation points and basis functions in Appendix \ref{subsec:append-lagrange-inner}. Recall, the degree of polynomials for the DG finite element space is denoted by $k$ and the degree of interpolation operator is denoted by $M$. It shows almost full convergence rate, except some order reduction when $k=2$ and $M=2$, similar to previous results in \cite{huang2017quadrature}. Actually, for $k=2$ and $M=4$, the numerical scheme is the same as the   DG scheme in which the integrals are evaluated exactly, since the physical flux for Burgers' equation is a quadratic function. It seems that the convergence rate is almost full order with $k=1$ and $M=1$. Therefore, the condition assumed in Proposition \ref{prop:accurate-interp} may not be necessary for the standard DG method. We also test the accuracy using the Lagrange interpolation in which the interpolation points are at the interface (see the interpolation points and basis functions in Appendix \ref{subsec:append-lagrange-interface}). The results are similar to those in Table \ref{ex:table-2D-Burgers-Full-Lagr-inner}, and thus they are not presented here for saving space. The Hermite interpolations with $k=1,M=3$ and $k=2,M=3$ and $k=3,M=5$ (see the interpolation points and basis functions in Appendix \ref{sec:append-hermite}) are also tested. The same results are observed and are omitted for brevity.
\begin{table}[!hbp]
\centering
\caption{2D Burgers' equation at $T=0.01$, $\Delta t=0.1h$, standard DG, Lagrange interpolation with inner interpolation points.}
\label{ex:table-2D-Burgers-Full-Lagr-inner}
\begin{tabular}{c|c|c|c|c|c|c|c}
  \hline
 poly degrees & $N$ & L$^1$-error & order & L$^2$-error & order & L$^{\infty}$-error & order \\
  \hline
\multirow{4}{4em}{$k$ = 1, $M$ = 1}
& 3 & 1.88e-02 & - & 2.51e-02 & - & 6.40e-02 & - \\ 
& 4 & 5.75e-03 & 1.71 & 7.13e-03 & 1.81 & 2.09e-02 & 1.61 \\ 
& 5 & 1.67e-03 & 1.78 & 2.02e-03 & 1.82 & 6.23e-03 & 1.75 \\ 
& 6 & 4.41e-04 & 1.92 & 5.33e-04 & 1.92 & 1.68e-03 & 1.89 \\ 
  \hline
\multirow{4}{4em}{$k$ = 1, $M$ = 2}
& 3 & 1.81e-02 & - & 2.53e-02 & - & 7.09e-02 & - \\ 
& 4 & 5.15e-03 & 1.81 & 6.98e-03 & 1.86 & 2.18e-02 & 1.70 \\ 
& 5 & 1.45e-03 & 1.82 & 1.92e-03 & 1.86 & 6.44e-03 & 1.76 \\ 
& 6 & 3.80e-04 & 1.94 & 4.99e-04 & 1.94 & 1.71e-03 & 1.91 \\ 
\hline
\multirow{4}{4em}{$k$ = 2, $M$ = 2}
& 3 & 2.13e-03 & - & 2.66e-03 & - & 1.01e-02 & - \\ 
& 4 & 5.11e-04 & 2.06 & 6.64e-04 & 2.00 & 2.77e-03 & 1.87 \\ 
& 5 & 1.08e-04 & 2.25 & 1.43e-04 & 2.21 & 5.42e-04 & 2.36 \\ 
& 6 & 1.71e-05 & 2.66 & 2.38e-05 & 2.59 & 9.55e-05 & 2.50 \\ 
\hline
\multirow{4}{4em}{$k$ = 2, $M$ = 3}
& 3 & 7.70e-04 & - & 1.03e-03 & - & 3.77e-03 & - \\ 
& 4 & 1.56e-04 & 2.31 & 2.05e-04 & 2.33 & 7.58e-04 & 2.31 \\ 
& 5 & 2.79e-05 & 2.48 & 3.63e-05 & 2.50 & 1.38e-04 & 2.45 \\ 
& 6 & 4.37e-06 & 2.67 & 5.99e-06 & 2.60 & 2.21e-05 & 2.65 \\ 
\hline
\multirow{4}{4em}{$k$ = 2, $M$ = 4}
& 3 & 7.98e-04 & - & 1.05e-03 & - & 3.61e-03 & - \\
& 4 & 1.62e-04 & 2.30 & 2.06e-04 & 2.34 & 7.53e-04 & 2.26 \\
& 5 & 2.83e-05 & 2.52 & 3.65e-05 & 2.50 & 1.38e-04 & 2.45 \\
& 6 & 4.40e-06 & 2.69 & 6.00e-06 & 2.60 & 2.21e-05 & 2.64 \\
\hline
\end{tabular}
\end{table}

Next, we test the convergence rate of the sparse grid DG method. We apply three different types of interpolation. The first one is the Lagrange interpolation with the interpolation points at the inner points of elements. The results are shown in Table \ref{ex:table-2D-Burgers-Sparse-Lagr-inner}, some {  instability} is observed for very fine mesh with $k=2.$ The second one is the the Lagrange interpolation with the interpolation points at the interface. The results are shown in Table \ref{ex:table-2D-Burgers-Sparse-Lagr-interface}. The results are better than the first type. For $k = 1$ and $M= 2$, the convergence order is around 1.5, as predicted. For $k = 2$, the convergence order is around 2 with $M= 2$. {  This indicates} that the condition in Proposition \ref{prop:accurate-interp} is necessary here. For $k = 2$ and $M= 3$, there still exists some {  instability} for very fine mesh. This motivates us to apply the Hermite interpolation in which we only use the end points. As shown in Table \ref{ex:table-2D-Burgers-Sparse-Herm}, the scheme with the Hermite interpolation is stable with predicted accuracy. This numerical experiment reveals that the Hermite interpolation is more stable than the Lagrange interpolation, and should be the method of choice.
\begin{table}[!hbp]
\centering
\caption{2D Burgers' equation at $T=0.01$, $\Delta t=0.1h$, sparse grid DG, Lagrange interpolation with inner interpolation points.}
\label{ex:table-2D-Burgers-Sparse-Lagr-inner}
\begin{tabular}{c|c|c|c|c|c|c|c}
  \hline
 poly degrees & $N$ & L$^1$-error & order & L$^2$-error & order & L$^{\infty}$-error & order \\
  \hline
\multirow{4}{4em}{$k$ = 1, $M$ = 1}
& 4 & 3.61e-02 & - & 4.90e-02 & - & 2.45e-01 & - \\ 
& 5 & 1.60e-02 & 1.17 & 2.18e-02 & 1.17 & 1.29e-01 & 0.93 \\ 
& 6 & 7.61e-03 & 1.07 & 1.02e-02 & 1.09 & 7.15e-02 & 0.85 \\ 
& 7 & 3.48e-03 & 1.13 & 4.71e-03 & 1.12 & 3.33e-02 & 1.10 \\ 
\hline
\multirow{4}{4em}{$k$ = 1, $M$ = 2}
& 4 & 2.89e-02 & - & 4.07e-02 & - & 2.47e-01 & - \\ 
& 5 & 8.70e-03 & 1.73 & 1.21e-02 & 1.75 & 8.07e-02 & 1.61 \\ 
& 6 & 2.98e-03 & 1.55 & 4.21e-03 & 1.52 & 3.03e-02 & 1.41 \\ 
& 7 & 1.13e-03 & 1.40 & 1.61e-03 & 1.38 & 1.10e-02 & 1.46 \\ 
\hline
\multirow{3}{4em}{$k$ = 2, $M$ = 2}
& 4 & 1.62e-02 & - & 2.61e-02 & - & 2.22e-01 & - \\ 
& 5 & 5.13e-02 & -1.67 & 1.13e-01 & -2.11 & 1.05e+00 & -2.23 \\ 
& 6 & 1.38e+06 & -24.68 & 4.53e+06 & -25.26 & 1.12e+08 & -26.67 \\ 
\hline
\multirow{4}{4em}{$k$ = 2, $M$ = 3}
& 4 & 3.37e-03 & - & 4.78e-03 & - & 4.77e-02 & - \\ 
& 5 & 7.12e-04 & 2.24 & 1.03e-03 & 2.21 & 9.49e-03 & 2.33 \\ 
& 6 & 3.71e-04 & 0.94 & 6.44e-04 & 0.68 & 7.64e-03 & 0.31 \\ 
& 7 & 2.13e-03 & -2.52 & 4.31e-03 & -2.74 & 6.56e-02 & -3.10 \\ 
\hline
\end{tabular}
\end{table}

\begin{table}[!hbp]
\centering
\caption{2D Burgers' equation at $T=0.01$, $\Delta t=0.1h$, sparse grid DG, Lagrange interpolation with interface interpolation points.}
\label{ex:table-2D-Burgers-Sparse-Lagr-interface}
\begin{tabular}{c|c|c|c|c|c|c|c}
  \hline
 poly degrees & $N$ & L$^1$-error & order & L$^2$-error & order & L$^{\infty}$-error & order \\
  \hline
\multirow{4}{4em}{$k$ = 1, $M$ = 1}
& 4 & 3.58e-02 & - & 4.87e-02 & - & 2.45e-01 & - \\ 
& 5 & 1.58e-02 & 1.18 & 2.15e-02 & 1.18 & 1.27e-01 & 0.95 \\ 
& 6 & 7.47e-03 & 1.08 & 1.00e-02 & 1.10 & 7.05e-02 & 0.85 \\ 
& 7 & 3.40e-03 & 1.14 & 4.60e-03 & 1.13 & 3.30e-02 & 1.09 \\ 
\hline
\multirow{4}{4em}{$k$ = 1, $M$ = 2}
& 4 & 2.84e-02 & - & 4.01e-02 & - & 2.38e-01 & - \\ 
& 5 & 8.14e-03 & 1.80 & 1.13e-02 & 1.83 & 7.49e-02 & 1.67 \\ 
& 6 & 2.70e-03 & 1.59 & 3.88e-03 & 1.54 & 2.73e-02 & 1.46 \\ 
& 7 & 8.07e-04 & 1.74 & 1.17e-03 & 1.73 & 1.10e-02 & 1.31 \\ 
\hline
\multirow{4}{4em}{$k$ = 2, $M$ = 2}
& 4 & 5.74e-03 & - & 7.71e-03 & - & 2.79e-02 & - \\ 
& 5 & 2.68e-03 & 1.10 & 3.35e-03 & 1.20 & 1.05e-02 & 1.41 \\ 
& 6 & 6.14e-04 & 2.13 & 7.70e-04 & 2.12 & 3.51e-03 & 1.58 \\ 
& 7 & 1.59e-04 & 1.95 & 2.03e-04 & 1.92 & 1.21e-03 & 1.54 \\ 
\hline
\multirow{4}{4em}{$k$ = 2, $M$ = 3}
& 4 & 3.37e-03 & - & 4.78e-03 & - & 4.72e-02 & - \\ 
& 5 & 7.09e-04 & 2.25 & 1.02e-03 & 2.22 & 9.25e-03 & 2.35 \\ 
& 6 & 3.49e-04 & 1.02 & 6.03e-04 & 0.77 & 7.12e-03 & 0.38 \\ 
& 7 & 1.70e-03 & -2.29 & 3.44e-03 & -2.51 & 5.21e-02 & -2.87 \\ 
\hline
\end{tabular}
\end{table}

\begin{table}[!hbp]
\centering
\caption{{  2D Burgers' equation at $T=0.01$, $\Delta t=0.1h$ for $k=1,2$ and $\Delta t=0.1h^{4/3}$ for $k=3$, sparse grid DG, Hermite interpolation.}}
\label{ex:table-2D-Burgers-Sparse-Herm}
\begin{tabular}{c|c|c|c|c|c|c|c}
  \hline
 poly degrees & $N$ & L$^1$-error & order & L$^2$-error & order & L$^{\infty}$-error & order \\
  \hline
\multirow{4}{4em}{$k$ = 1, $M$ = 3}
& 5 & 8.30e-03 & - & 1.15e-02 & - & 7.36e-02 & - \\ 
& 6 & 2.85e-03 & 1.54 & 3.99e-03 & 1.52 & 2.59e-02 & 1.51 \\ 
& 7 & 8.81e-04 & 1.70 & 1.23e-03 & 1.70 & 1.03e-02 & 1.33 \\ 
& {  8} & {  2.66e-04} & {  1.73} & {  3.72e-04} & {  1.73} & {  3.69e-03} & {  1.48} \\ 
\hline
\multirow{4}{4em}{$k$ = 2, $M$ = 3}
& 5 & 1.10e-03 & - & 1.47e-03 & - & 5.30e-03 & - \\ 
& 6 & 1.60e-04 & 2.78 & 2.11e-04 & 2.80 & 9.04e-04 & 2.55 \\ 
& 7 & 2.75e-05 & 2.54 & 3.67e-05 & 2.52 & 2.23e-04 & 2.02 \\ 
& {  8} & {  6.25e-06} & {  2.14} & {  8.41e-06} & {  2.13} & {  4.98e-05} & {  2.16} \\ 
\hline
\multirow{4}{4em}{$k$ = 2, $M$ = 5}
& 5 & 3.85e-04 & - & 5.45e-04 & - & 3.07e-03 & - \\ 
& 6 & 8.18e-05 & 2.23 & 1.25e-04 & 2.12 & 1.12e-03 & 1.45 \\ 
& 7 & 1.37e-05 & 2.58 & 1.99e-05 & 2.65 & 2.26e-04 & 2.31 \\ 
& {  8} & {  2.30e-06} & {  2.57} & {  3.41e-06} & {  2.55} & {  2.90e-05} & {  2.97} \\
\hline
\multirow{4}{4em}{$k$ = 3, $M$ = 5}
& 5 & 4.37e-05 & - & 6.85e-05 & - & 2.86e-04 & - \\ 
& 6 & 3.75e-06 & 3.54 & 6.19e-06 & 3.47 & 6.26e-05 & 2.19 \\ 
& 7 & 2.81e-07 & 3.74 & 4.46e-07 & 3.80 & 3.16e-06 & 4.31 \\
& {  8} & {  2.74e-08} & {  3.35} & {  4.44e-08} & {  3.33} & {  6.30e-07} & {  2.33} \\
\hline
\end{tabular}
\end{table}

Next, we discuss the convergence rate with  adaptivity. Following \cite{guo2017adaptive}, two types rates of convergence are calculated. The first one is the convergence rate with respect to the error thresold:
\begin{equation*}
    R_{\epsilon_l} = \frac{\log(e_{l-1}/e_l)}{\log(\epsilon_{l-1}/\epsilon_l)}.
\end{equation*}
The second one is the convergence rate with respect to degrees of freedom:
\begin{equation*}
    R_{\textrm{DoF}_l} = \frac{\log(e_{l-1}/e_l)}{\log(\textrm{DoF}_{l-1}/\textrm{DoF}_l)}.
\end{equation*}
We run the simulations with a fixed maximum mesh level $N=8$ and different $\epsilon$ values, and we report the $L^2$ errors and the number of active degrees of freedom at $T=0.01$ in Table \ref{ex:table-2D-Burgers-Rate}. We observe similar convergence rates as in Table 1 in \cite{guo2017adaptive}: $R_{\epsilon}$ is slightly smaller than 1, and $R_{\textrm{DoF}}$ is much larger than $(k+1)/2$ ($R_{\textrm{DoF}}$ for the standard adaptive DG scheme for 2D problems) but still smaller than $(k+1)$. This demonstrates the effectiveness of the multiresolution adaptive algorithm. Sparsity is indeed achieved for smooth solutions. 
\begin{table}[!hbp]
\centering
\caption{2D Burgers' equation at $T=0.01$. Convergence rates with respect to the error {  threshold} and degrees of freedom.}
\label{ex:table-2D-Burgers-Rate}
\begin{tabular}{c|c|c|c|c|c}
  \hline
& $\epsilon$ & DoF & L$^2$-error & $R_{\textrm{DoF}}$ & $R_{\epsilon}$ \\
  \hline
\multirow{4}{4em}{$k = 1$}
& 1e-04 & 3488 & 3.89e-04 & - & - \\ 
& 5e-05 & 4608 & 2.95e-04 & 0.98 & 0.40 \\ 
& 1e-05 & 9408 & 9.29e-05 & 1.62 & 0.72 \\ 
& 5e-06 & 12272 & 4.79e-05 & 2.49 & 0.95 \\ 
\hline
\multirow{4}{4em}{$k = 2$}
& 1e-04 & 1656 & 2.02e-04 & - & - \\ 
& 5e-05 & 1908 & 1.20e-04 & 3.71 & 0.76 \\ 
& 1e-05 & 3600 & 3.83e-05 & 1.79 & 0.71 \\ 
& 5e-06 & 4068 & 2.33e-05 & 4.09 & 0.72 \\ 
\hline
\multirow{4}{4em}{$k = 3$}
& 1e-04 & 1152 & 5.52e-05 & - & - \\ 
& 5e-05 & 1472 & 2.49e-05 & 3.25 & 1.15 \\ 
& 1e-05 & 1920 & 1.16e-05 & 2.89 & 0.48 \\ 
& 5e-06 & 2624 & 5.62e-06 & 2.31 & 1.04 \\ 
\hline
\end{tabular}
\end{table}

Next, we test the ability of our scheme on capturing non-smooth solutions. We also take the same initial value. The shock begins to develop at $t=\frac{1}{4\pi}\approx0.07958$. The numerical results at $t=0.2$ are shown in Fig. \ref{fig:burgers2d-shock}. We observe the numerical solution coincides with the exact solution very well. The elements with non-zero artificial viscosity are also concentrated near the shock.
\begin{figure}
    \centering
    \subfigure[numerical solution (surface plot)]{
    \begin{minipage}[b]{0.46\textwidth}
    \includegraphics[width=1\textwidth]{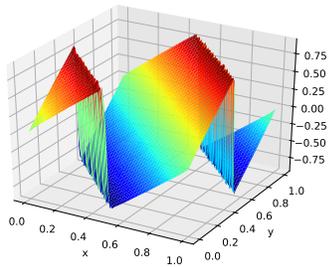}
    \end{minipage}
    }
    \subfigure[numerical solution (contour plot)]{
    \begin{minipage}[b]{0.46\textwidth}    
    \includegraphics[width=1\textwidth]{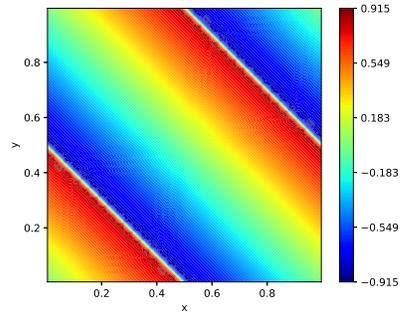}
    \end{minipage}
    }
    \bigskip
    \subfigure[numerical solution in 1D cut along diagnal]{
    \begin{minipage}[b]{0.46\textwidth}
    \includegraphics[width=1\textwidth]{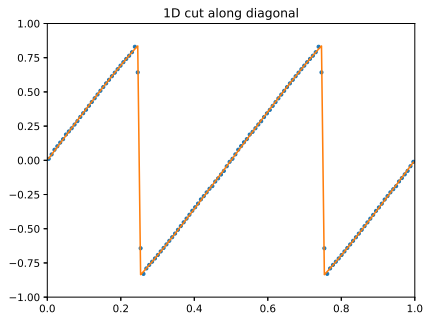}
    \end{minipage}
    }
    \subfigure[elements with   artificial viscosity]{
    \begin{minipage}[b]{0.46\textwidth}    
    \includegraphics[width=1\textwidth]{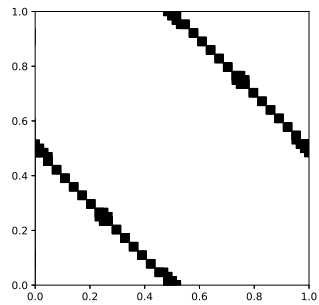}
    \end{minipage}
    }    
    \caption{Example \ref{exam:burgers-2d}: 2D Burgers' equation at $t=0.2$. $N=7$ and $\epsilon=5e-4$. (a) numerical solution (surface plot); (b) numerical solution (contour plot); (c) numerical solution in 1D cut along diagnal; (d) elements with   artificial viscosity.}
    \label{fig:burgers2d-shock}
\end{figure}

{  Next, we compare the efficiency of our adaptive method with the non-adaptive (full grid) DG scheme for non-smooth solutions in Fig. \ref{fig:burgers2d-full}. Here,   the full grid scheme with $N=6$   has 36864 DoF. The adaptive DG method has on average 19086 DoF and at most 22000 DoF, which is fewer than the full grid method. However, the adaptive is more accurate than the full grid as can be seen in Fig. \ref{fig:burgers2d-full}.
\begin{figure}
    \centering
    \subfigure[profile in 1D cut along diagnal]{
    \begin{minipage}[b]{0.46\textwidth}
    \includegraphics[width=1\textwidth]{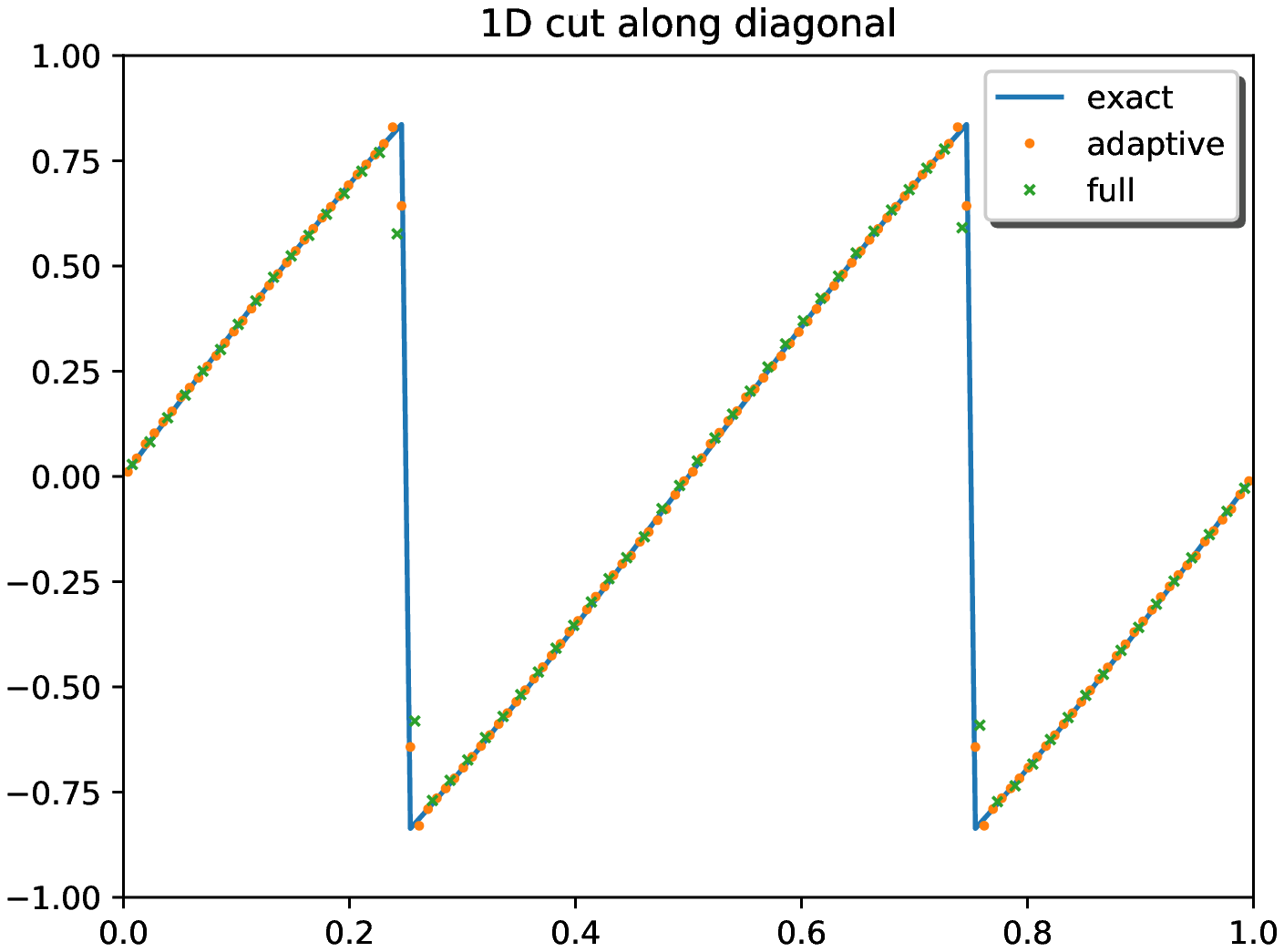}    
    \end{minipage}
    }
    \subfigure[error between numerical solution and exact solution]{
    \begin{minipage}[b]{0.46\textwidth}    
    \includegraphics[width=1\textwidth]{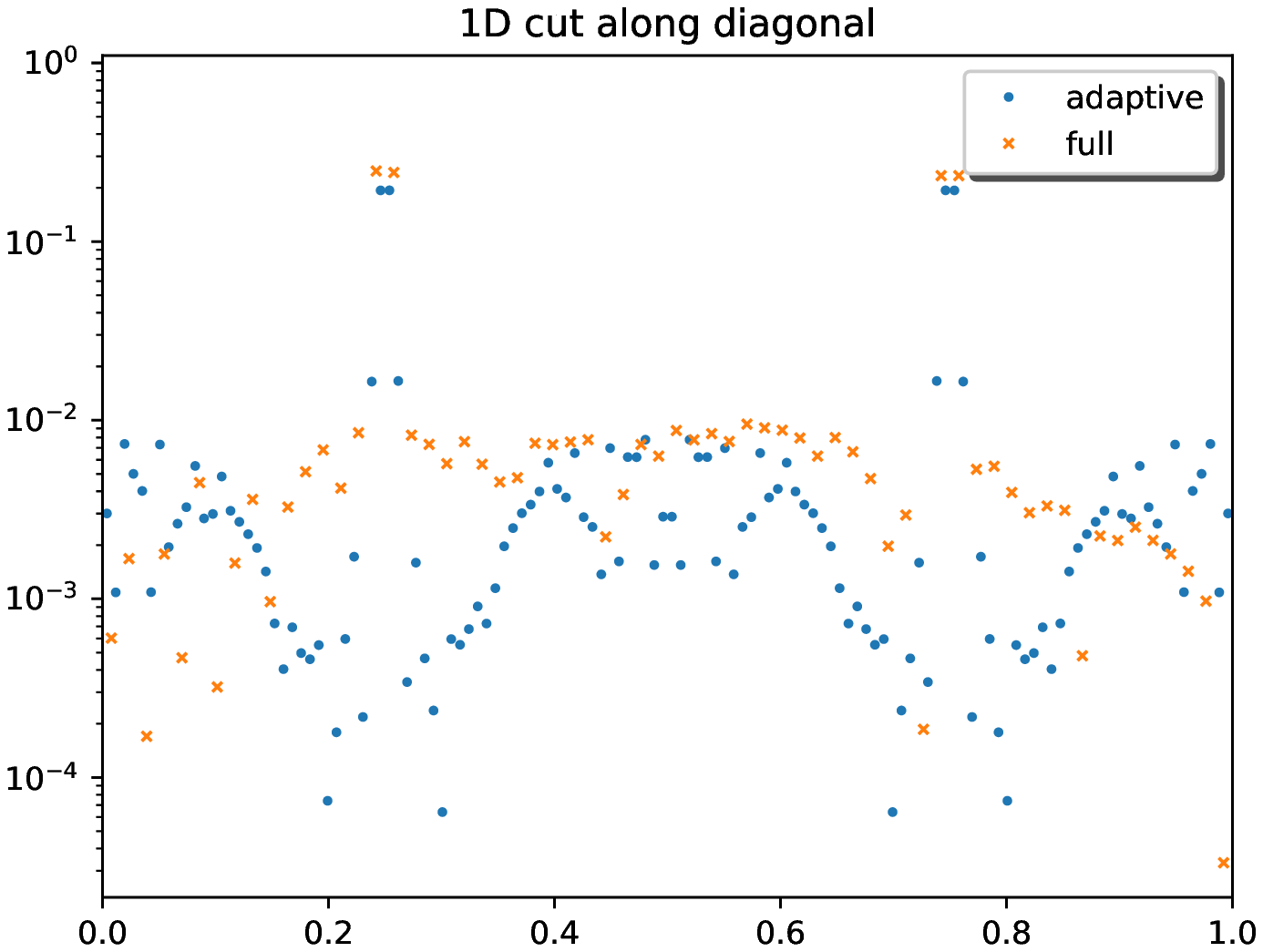}
    \end{minipage}
    }
    \caption{Example \ref{exam:burgers-2d}: 2D Burgers' equation at $t=0.2$. Full grid (non-adaptive) DG with $N=6$ vs. adaptive DG with $\epsilon=5e-4$. Left: profile in 1D cut along diagnal; right: error between numerical solution and exact solution.}
    \label{fig:burgers2d-full}
\end{figure}

We also investigate the efficiency of   adaptive method for smooth solutions in Fig. \ref{fig:burgers2d-dof-compare}. To reach the same accuracy, the adaptive method has much less DoF than the non-adaptive method. This is because of the sparse grid nature of our method.
\begin{figure}
    \centering
    \includegraphics[width=0.5\textwidth]{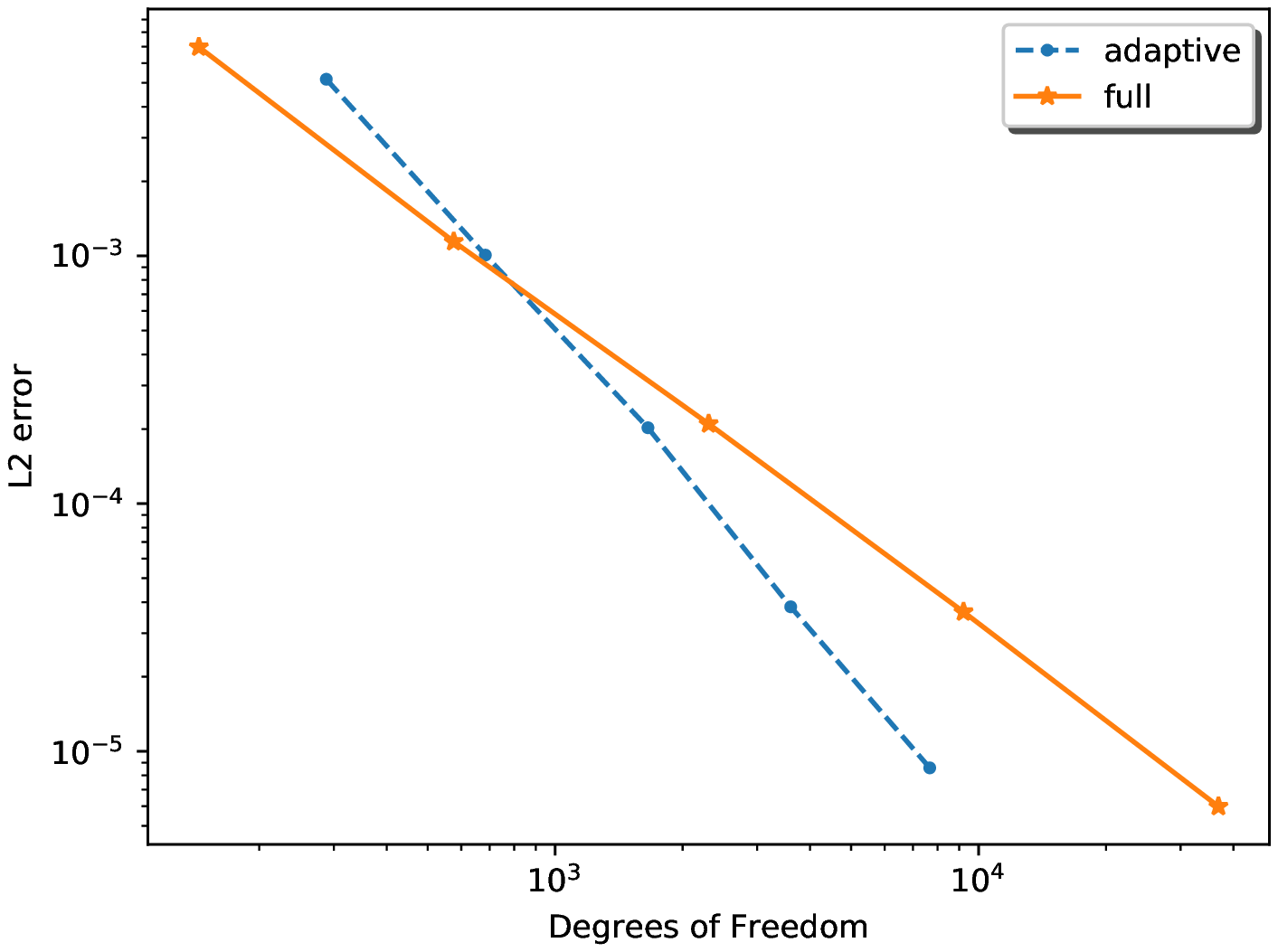}
    \caption{Example \ref{exam:burgers-2d}: 2D Burgers' equation at $t=0.01$. Full grid (non-adaptive) DG with $N=2,3,4,5,6$ vs. adaptive DG with $\epsilon=10^{-2},10^{-3},\dots,10^{-6}$ and fixed $N=8$.}
    \label{fig:burgers2d-dof-compare}
\end{figure}
}
\end{examp}



\begin{examp}[2D KPP rotating wave problem]\label{exam:kpp}
In the last example, we consider the 2D KPP rotating wave problem with the non-convex physical flux:
\begin{equation*}
    u_t + \sin(u)_x + \cos(u)_y = 0.
\end{equation*}
The initial condition is
\begin{equation*}
    \begin{split}
    u_0(x,y) = \left\{
    \begin{aligned}
    & 3.5\pi , \quad & (x-\half)^2+(y-\half)^2\le \frac{1}{16}, \\ 
    & 0.25\pi, \quad & \textrm{otherwise}.
    \end{aligned}
    \right.
    \end{split}
\end{equation*}
This is a rather challenging test case proposed in \cite{kurganov2007adaptive}, since the flux is non convex and a two-dimensional composite wave structure is present.
The code is run up to $t=0.4$. The maximum mesh level is $N=7$ and the error thresold is $\epsilon=5\times10^{-4}$. The numerical solutions and elements with non-zero artificial viscosity at $t=0.2$ and $t=0.4$ are shown in Fig. \ref{fig:kpp}. Our numerical scheme can capture the wave structure very well.

\begin{figure}
    \centering
    \subfigure[numerical solution in 2D at $t=0.2$]{
    \begin{minipage}[b]{0.46\textwidth}
    \includegraphics[width=1\textwidth]{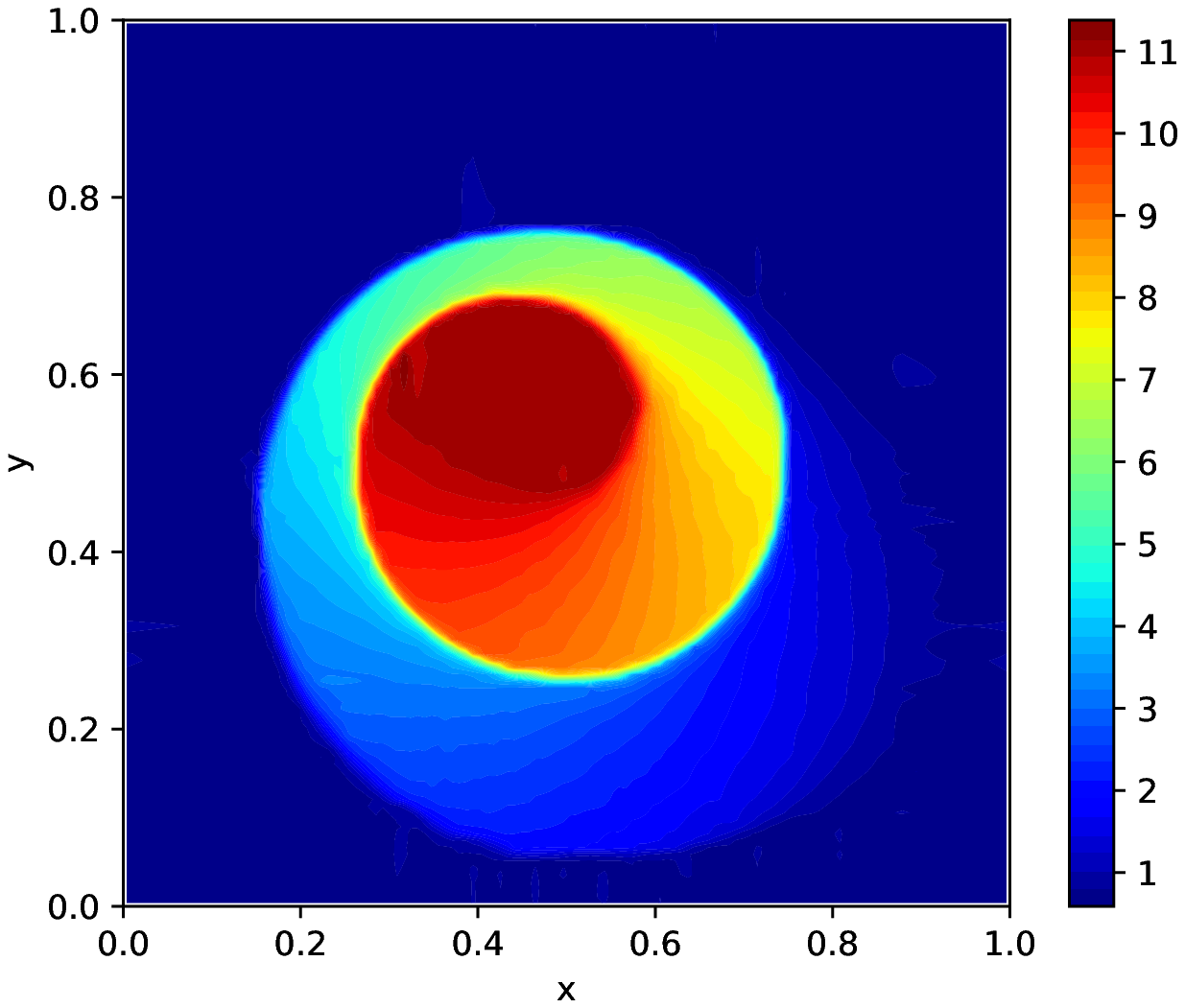}
    \end{minipage}
    }
    \subfigure[elements with artificial viscosity at $t=0.2$]{
    \begin{minipage}[b]{0.46\textwidth}    
    \includegraphics[width=1\textwidth]{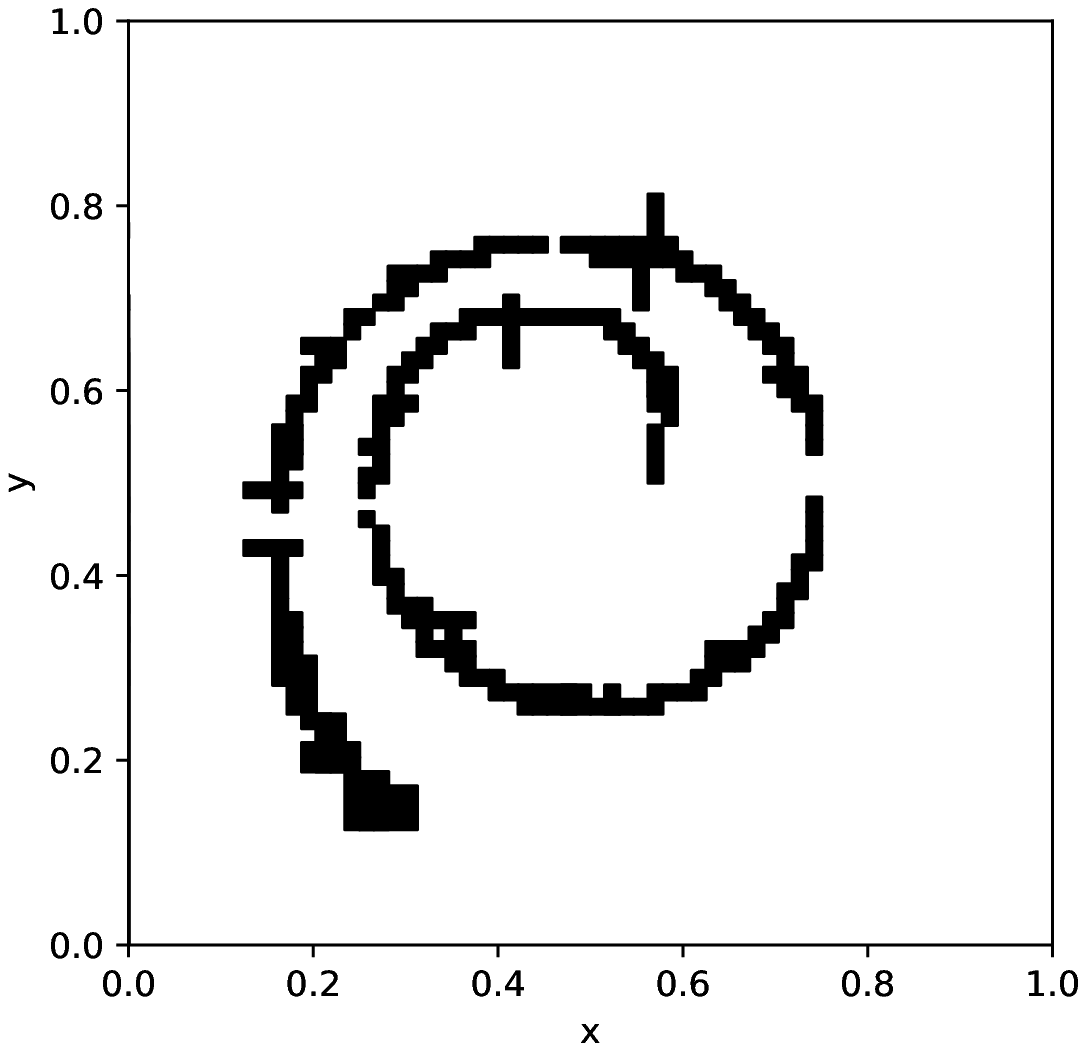}
    \end{minipage}
    }
    \bigskip
    \subfigure[numerical solution in 2D at $t=0.4$]{
    \begin{minipage}[b]{0.46\textwidth}
    \includegraphics[width=1\textwidth]{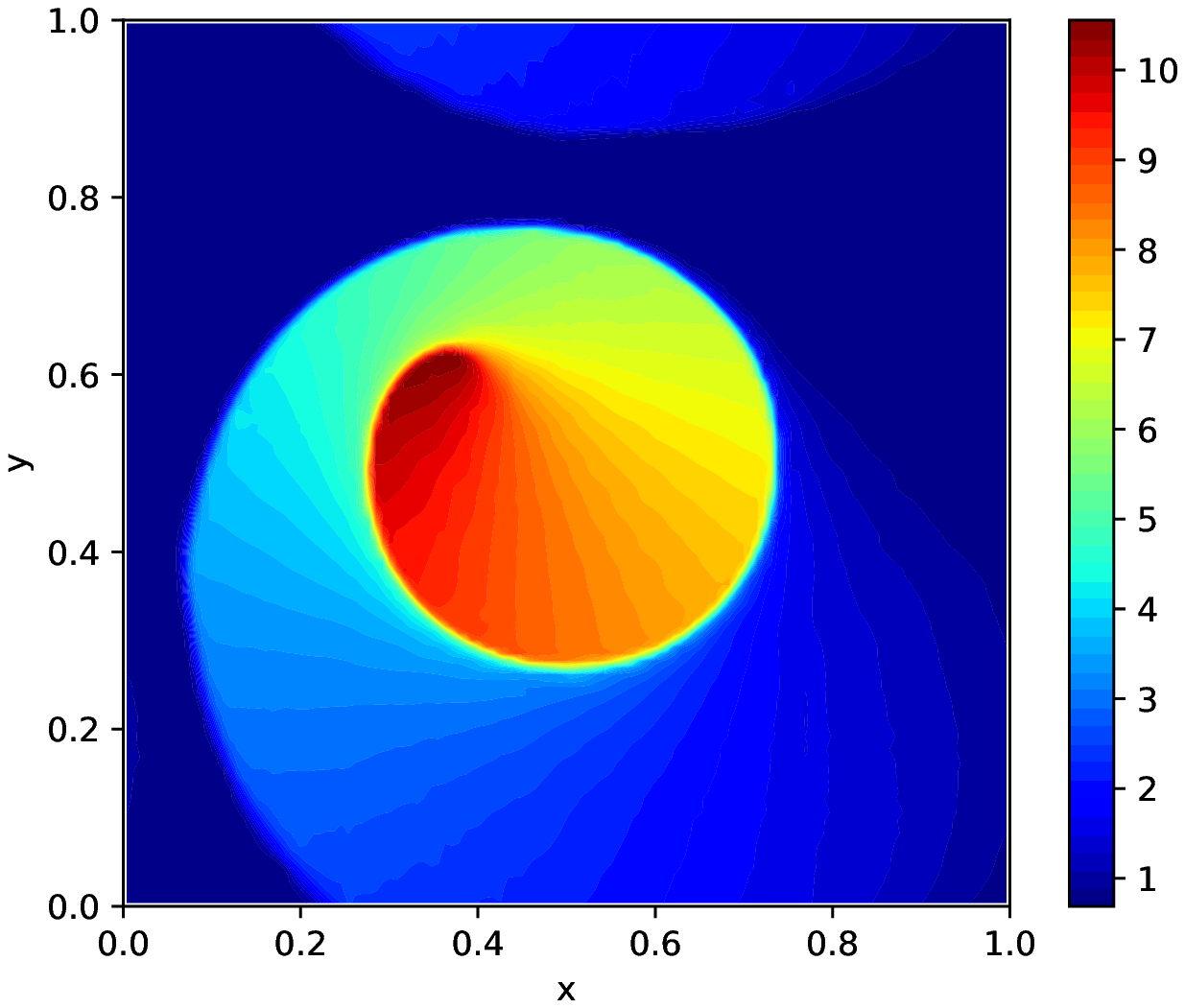}
    \end{minipage}
    }
    \subfigure[elements with artificial viscosity at $t=0.4$]{
    \begin{minipage}[b]{0.46\textwidth}    
    \includegraphics[width=1\textwidth]{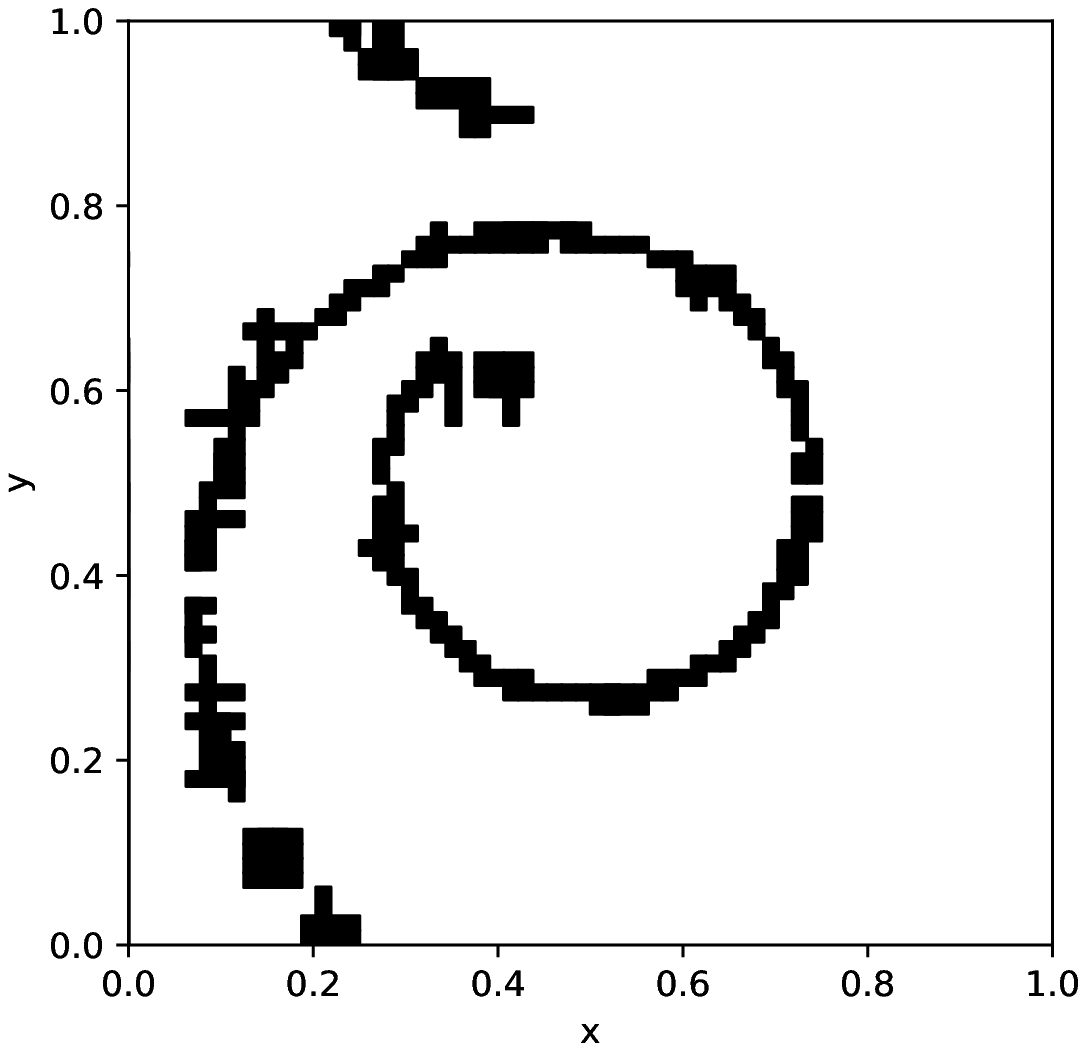}
    \end{minipage}
    }
    \caption{Example \ref{exam:kpp}: 2D KPP rotating wave problem at $t=0.2$ and $t=0.4$. $N=7$ and $\epsilon=5e-4$. Left: numerical solution in 2D; right: elements with   artificial viscosity.}
    \label{fig:kpp}
\end{figure}

\end{examp}



\section{Concluding remarks}

In this paper, we propose an adaptive multiresolution  DG scheme for scalar hyperbolic conservation laws in multidimensions. Besides the Alpert's multiwavelets, the interpolatory multiwavelets are applied to treat the nonlinear integrals over elements and edges in DG schemes. From numerical study, we find that the multiresolution Hermite interpolation is the most stable. Because of the coherence of the multiresolution interpolation with the MRA of the numerical solution, our method    can achieve similar computational complexity as the sparse grid DG method for smooth solutions \cite{guo2016transport, guo2017adaptive}. Artificial viscosity and adaptivity are activated for non-smooth solutions. The required DoF  corresponds to the intrinsic complexity of the solution structure, and   artificial viscosity is only added at locations near the shock maintaining   sharpness of the solution profile.

{  The motivation of this work is for efficient computations of nonlinear PDE problems in high dimensions. The construction of the numerical schemes in this paper will be  extended to Hamilton-Jacobi-Bellman equations and kinetic equations in the future. The adaptive mechanism can also be incorporated for multiscale kinetic-fluid simulations.
}

\section*{Acknowledgements}

We would like to thank Chi-Wang Shu from Brown University for providing the CFL constants of local DG method for diffusion equations and many fruitful discussions. We also would like to thank Kai Huang from Michigan State University, Wei Guo from Texas Tech University, Yuan Liu from Wichita State University, Zhanjing Tao from Jilin University in China, and Qi Tang from Los Alamos National Laboratory for the assistance and discussion in code implementation.

\appendix

\section{Lagrange interpolation basis functions}

For the completeness of our paper, we present the multiresolution  interpolation basis functions, which are first introduced in \cite{tao2019collocation}. In this part, we focus on the Lagrange interpolantion, i.e. $K=0.$  The case in which the interpolation points {  are imposed} in the inner domain, as implemented in Table \ref{ex:table-2D-Burgers-Sparse-Lagr-inner} is discussed first, followed by   the case in which the points are imposed at the cell interface, see the corresponding numerical results in Table \ref{ex:table-2D-Burgers-Sparse-Lagr-interface}.

The basis functions in $\tilde{W}_1$ are piecewise polynomials on $I_l:=(0,\half)$ and $I_r:=(\half,1)$. Note that the functions may be discontinuous at the interface $x=1/2$, thus $I_l$ and $I_r$ are  both  defined to be open intervals. The basis functions in $\tilde{W}_1$ in this paper are all supported on one half interval $I_l$ or $I_r$ and vanish on the other half. For simplicity, we will only declare the function on its support. For example, $\psi_0(x)|_{I_r}$ gives the definition of $\psi_0$ on $I_r$ and indicates that $\psi_0$ vanishes on $I_l$.

\subsection{interpolation points in the inner domain}\label{subsec:append-lagrange-inner}

\subsubsection{$P=1$ and $K=0$}
The interpolation points are
\begin{equation*}
    \tilde{X}_0 = \{ \frac{1}{3}, \frac{2}{3} \}, \quad \tilde{X}_1 = \{ \frac{1}{6}, \frac{5}{6} \}.
\end{equation*}
The basis functions in $\tilde{W}_0^1$ and $\tilde{W}_1^1$ are
\begin{align*} 
\begin{array}{ll}
\phi_{0}(x)=-3x+2,  & \phi_{1}(x)=3x-1.\\
\psi_{0}(x)|_{I_l}= -6x+2, & \psi_{1}(x)|_{I_r}= 6x-4. \\
\end{array} 
\end{align*}

\subsubsection{$P=2$ and $K=0$}
The interpolation points are
\begin{equation*}
    \tilde{X}_0 =  \{ \frac{1}{6}, \frac{1}{3}, \frac{2}{3} \}, \quad \tilde{X}_1 =  \{ \frac{1}{12}, \frac{7}{12}, \frac{5}{6} \}.
\end{equation*}
The basis functions in $\tilde{W}_0^2$ and $\tilde{W}_1^2$ are
\begin{align*} 
\begin{array}{ll}
\phi_{0}(x)=\frac{4}{3}(3x-2)(3x-1),  \quad  \phi_{1}(x)=-(3x-2)(6x-1), \\ 
\phi_2(x)=\frac{1}{3}(3x-1)(6x-1).		
\end{array} 
\end{align*}
and
\begin{align*} 
\begin{array}{ll}
\psi_{0}(x)|_{I_l} = \frac{8}{3}(3x-1)(6x-1), &
\psi_{1}(x)|_{I_r} = \frac{8}{3}(3x-2)(6x-5), \\
\psi_{2}(x)|_{I_r} = \frac{2}{3}(3x-2)(12x-7).
\end{array} 
\end{align*}

\subsubsection{$P=3$ and $K=0$}

The interpolation points are
\begin{equation*}
    \tilde{X}_0 =  \{ \frac{1}{5}, \frac{2}{5}, \frac{3}{5}, \frac{4}{5} \}, \quad \tilde{X}_1 =  \{ \frac{1}{10}, \frac{3}{10}, \frac{7}{10}, \frac{9}{10} \}.
\end{equation*}
The basis functions in $\tilde{W}_0^3$ and $\tilde{W}_1^3$ are
\begin{align*} 
\begin{array}{ll}
\phi_0(x) = -\frac{1}{6}(5x-4)(5x-3)(5x-2), & \phi_1(x) = \frac{1}{2}(5x-4)(5x-3)(5x-1), \\
\phi_2(x) = -\frac{1}{2}(5x-4)(5x-2)(5x-1), & \phi_3(x) = \frac{1}{6}(5x-3)(5x-2)(5x-1),
\end{array} 
\end{align*}
and
\begin{align*} 
\begin{array}{l}
\psi_{0}(x)|_{I_l}= -\frac{2}{3}(5x-2)(5x-1)(10x-3), \\
\psi_{1}(x)|_{I_l}= -2(5x-2)(5x-1)(10x-1), \\ 
\psi_{2}(x)|_{I_r}= 2(5x-4)(5x-3)(10x-9), \\
\psi_{3}(x)|_{I_r}= \frac{2}{3}(5x-4)(5x-3)(10x-7).	
\end{array}
\end{align*}

\subsection{interpolation points at the interface}\label{subsec:append-lagrange-interface}

\subsubsection{$P=1$ and $K=0$}

The interpolation points are
\begin{equation*}
    \tilde{X}_0 =  \{ 0^+, 1^- \}, \quad \tilde{X}_1 =  \{ (\frac{1}{2})^-, (\frac{1}{2})^+ \}.
\end{equation*}
Here and below, we use superscripts $+,-$ to emphasize the left and right limits of a function at that point. This is a feature of the    discontinuous piecewise polynomial space.

The basis functions in $\tilde{W}_0^1$ and $\tilde{W}_1^1$ are
\begin{align*} 
\begin{array}{ll}
\phi_{1}(x)=-x+1,  & \phi_{2}(x)=x, \\
\psi_{1}(x)|_{I_l}= 2x, & \psi_{2}(x)|_{I_r} = -2x+2.
\end{array}
\end{align*}

\subsubsection{$P=2$ and $K=0$}
The interpolation points are
\begin{equation*}
    \tilde{X}_0 =  \{ 0^+, (\frac{1}{2})^-, 1^- \}, \quad \tilde{X}_1 =  \{ (\frac{1}{4})^-, (\frac{1}{2})^+, (\frac{3}{4})^- \}.
\end{equation*}
The basis functions in $\tilde{W}_0^2$ and $\tilde{W}_1^2$ are
\begin{align*} 
\begin{array}{lll}
\phi_{1}(x)=2(x-\frac{1}{2})(x-1),  &  \phi_{2}(x)= -4x(x-1), & \phi_{3}(x)=2x(x-\frac{1}{2}),
\end{array} 
\end{align*}
and
\begin{align*} 
\begin{array}{ll}
\psi_{0}(x)|_{I_l} = -16x(x-\frac{1}{2}),  &
\psi_{1}(x)|_{I_r} = 8(x-\frac{3}{4})(x-1),  \\
\psi_{2}(x)|_{I_r} = -16(x-\frac{1}{2})(x-1).
\end{array} 
\end{align*}

\subsubsection{$P=3$ and $K=0$}

The interpolation points are
\begin{equation*}
    \tilde{X}_0 =  \{ 0^+, (\frac{1}{4})^-, (\frac{1}{2})^-, 1^- \}, \quad \tilde{X}_1 =  \{ (\frac{1}{8})^-, (\frac{1}{2})^+, (\frac{5}{8})^-, (\frac{3}{4})^- \}.
\end{equation*}
The basis functions in $\tilde{W}_0^3$ and $\tilde{W}_1^3$ are
\begin{align*} 
\begin{array}{ll}
    \phi_0(x)=-(x-1)(2x-1)(4x-1), & \phi_1(x)=\frac{32}{3}x(x-1)(2x-1), \\
    \phi_2(x)=-4x(x-1)(4x-1), & \phi_3(x)=\frac{1}{3}x(2x-1)(4x-1),
\end{array} 
\end{align*}
and
\begin{align*} 
\begin{array}{ll}
\psi_{0}(x)|_{I_l} = \frac{64}{3} x (2 x-1) (4x-1),  &
\psi_{1}(x)|_{I_r} = -2 (x-1) (4x-3) (8x-5), \\
\psi_{2}(x)|_{I_r} = \frac{64}{3} (x-1) (2x-1) (4x-3), &
\psi_{3}(x)|_{I_r} = -8 (x-1 ) (2x-1) (8x-5).
\end{array} 
\end{align*}

\section{Hermite interpolation basis functions}\label{sec:append-hermite}
The Hermite interpolation basis functions are presented here. 
The interpolation points are put at the cell interface:
\begin{equation*}
    \tilde{X}_0 = \{ 0^+, 1^- \}, \quad \tilde{X}_1 = \{ (\frac{1}{2})^-, (\frac{1}{2})^+ \}.
\end{equation*}

\subsection{$P=1$ and $K=1$}

The basis functions in $\tilde{W}_0^3$ and $\tilde{W}_1^3$ are
\begin{align*} 
\begin{array}{ll}
	\phi_{0,0}(x) = (x-1)^2(2x+1), & \phi_{1,0}(x) = -x^2(2x-3), \\ 
	\phi_{0,1}(x) = x(x-1)^2, & \phi_{1,1}(x) = x^2(x-1).
\end{array} 
\end{align*}
and
\begin{align*} 
\begin{array}{ll}
\psi_{0,0}(x)|_{I_l} = -4x^2(4x-3), &
\psi_{1,0}(x)|_{I_r} = 4(x-1)^2(4x-1), \\
\psi_{0,1}(x)|_{I_l} = 2x^2(2x-1), &
\psi_{1,1}(x)|_{I_r} = 2(x-1)^2(2x-1),
\end{array} 
\end{align*}

\subsection{$P=1$ and $K=2$}
The basis functions in $\tilde{W}_0^5$ and $\tilde{W}_1^5$ are
\begin{align*} 
\begin{array}{ll}
	\phi_{0,0}(x) = -(x-1)^3(6x^2+3x+1), & \phi_{0,1}(x) = -x(x-1)^3(3x+1),\\
	\phi_{0,2}(x) = -\frac{1}{2}x^2(x-1)^3, &  \phi_{1,0}(x) = x^3(6x^2-15x+10),  \\
	\phi_{1,1}(x) = -x^3(x-1)(3x-4), & \phi_{1,2}(x) = \frac{1}{2}x^3(x-1)^2.
\end{array} 
\end{align*}
and
\begin{align*} 
\begin{array}{ll}
\psi_{0,0}(x)|_{I_l} = 16 x^3 (12 x^2 - 15 x +5), &
\psi_{1,0}(x)|_{I_r} = -16 (x-1)^3 (12 x^2 - 9 x + 2), \\
\psi_{0,1}(x)|_{I_l} = -8 x^3 ( 2 x-1 ) (3 x-2 ), &
\psi_{1,1}(x)|_{I_r} = -8 (x-1)^3 (2 x-1) (3 x-1), \\
\psi_{0,2}(x)|_{I_l} = x^3(2x-1)^2, &
\psi_{1,2}(x)|_{I_r} = - (x-1)^3 (2 x-1)^2.					
\end{array} 
\end{align*}


\newpage

\bibliographystyle{abbrv}

\end{document}